\newtheorem{theorem}{Theorem}
\newtheorem{remark}[theorem]{Remark}
\newtheorem{lemma}[theorem]{Lemma}
\newtheorem{proposition}[theorem]{Proposition}
\newcommand{\R}{{\mathbb R}}
\newcommand{\N}{{\mathbb N}}
\numberwithin{theorem}{section}
\numberwithin{equation}{section}
\title [Critical double phase problems]{Critical double phase problems involving sandwich-type nonlinearities}
\author[C. Farkas]{Csaba Farkas}
\address[C. Farkas]{Sapientia Hungarian University of Transylvania, Department of Mathematics and Computer Science, T\^{a}rgu Mure\textcommabelow{s}, Romania \& Corvinus Centre for Operations Research, Corvinus Institute for Advanced Studies, Corvinus University of Budapest, F\H ov\'am t\'er 8, 1093, Budapest, Hungary}
\email{farkascs@ms.sapientia.ro}
\author[A. Fiscella]{Alessio Fiscella}
\address[A. Fiscella]{Departamento de Matem\'atica, Universidade Estadual de Campinas, IMECC, Rua S\'ergio Buarque de Holanda 651, CEP 13083--859,  Campinas SP, Brazil}
\email{fiscella@unicamp.br}
\author[K. Ho]{Ky Ho}
\address[K. Ho]{Department of Mathematics and Statistics, University of Economics Ho Chi Minh City, 59C, Nguyen Dinh Chieu Street, Ho Chi Minh City, Vietnam}
\email{kyhn@ueh.edu.vn}
\author[P. Winkert]{Patrick Winkert}
\address[P. Winkert]{Technische Universit\"{a}t Berlin, Institut f\"{u}r Mathematik, Stra\ss{}e des 17.\,Juni 136, 10623 Berlin, Germany}
\email{winkert@math.tu-berlin.de}
\subjclass{35B33, 35J20, 35J25, 35J62, 35J70, 46E35, 47J10}
\keywords{concentration-compactness principle, critical growth, Krasnoselskii's genus theory, sandwich-type growth, unbalanced growth}
\begin{document}

\begin{abstract}
	In this paper we study problems with critical and sandwich-type growth represented by
	\begin{equation*}%\label{problem}
		\begin{aligned}
			-\operatorname{div}\Big(|\nabla u|^{p-2}\nabla u + a(x)|\nabla u|^{q-2}\nabla u\Big)& = \lambda w(x)|u|^{s-2}u+\theta B\left(x,u\right)  && \text{in } \Omega,\\
			u& = 0 &&\text{on } \partial \Omega,
		\end{aligned}
	\end{equation*}
	where $\Omega\subset\mathbb{R}^N$ is a bounded domain with  Lipschitz boundary $\partial\Omega$, $1<p<s<q<N$, $\frac{q}{p}<1+\frac{1}{N}$, $0\leq a(\cdot)\in  C^{0,1}(\overline{\Omega})$, $\lambda$, $\theta$ are real parameters, $w$ is a suitable weight and $B\colon \overline{\Omega}\times \mathbb{R}\to\mathbb{R}$ is given by
	\begin{align*}
		B(x,t) :=b_0(x)|t|^{p^*-2}t+b(x)|t|^{q^*-2}t,
	\end{align*}
	where $r^*:=Nr/(N-r)$ for $r\in\{p,q\}$. Here the right-hand side combines the effect of a critical term given by $B(\cdot,\cdot)$ and a sandwich-type perturbation with exponent $s \in (p,q)$. Under different values of the parameters $\lambda$ and $\theta$, we prove the existence and multiplicity of solutions to the problem above. For this, we mainly exploit different variational methods combined with topological tools, like a new concentration-compactness principle, a suitable truncation argument and the Krasnoselskii's genus theory, by considering very mild assumptions on the data $a(\cdot)$, $b_0(\cdot)$ and $b(\cdot)$.
\end{abstract}

\maketitle

%*******************************************************************************
\section{Introduction and main results}
%*******************************************************************************

In the last decade, the double phase operator has gained interest in many different research areas. This operator is defined by
\begin{align}\label{double-phase-operator}
	\operatorname{div}\Big(|\nabla u|^{p-2}\nabla u + a(x)|\nabla u|^{q-2}\nabla u\Big),\quad 1<p<q,
\end{align}
and  arises from the study of general reaction-diffusion equations with nonhomogeneous diffusion and transport aspects. Applications can be found in biophysics, plasma physics and chemical reactions, with double phase features, where the function $u$ corresponds to the concentration term, and the differential operator represents the diffusion coefficient. The related integral functional to  \eqref{double-phase-operator} has the form
\begin{align}\label{double-phase-integral}
	J(u)=\int_\Omega \bigg( \frac{|\nabla u|^{p}}p + a(x) \frac{|\nabla u|^{q}}q \bigg) \,\mathrm{d} x,
\end{align}
for a bounded domain $\Omega\subset\R^N$. It appeared for the first time in the work of Zhikov \cite{Zhikov-1986} and is useful in the context of homogenization and elasticity theory. In this setting, the coefficient $a(\cdot)$ is associated to the geometry of composites made of two materials of hardness $p$ and $q$. Functionals of the form \eqref{double-phase-integral} can be regarded as particular instances of the seminal contributions by Marcellini \cite{Marcellini-1991, Marcellini-1989} which address issues concerning nonstandard growth and $(p,q)$-growth conditions. In fact, the regularity theory in \cite{Marcellini-1991} also applies to double phase integrals. In this regard, we also cite the recent works on $u$-dependence by Cupini--Marcellini--Mascolo \cite{Cupini-Marcellini-Mascolo-2023} and Marcellini \cite{Marcellini-2023}. For further reading on this topic, we also recommend reading the paper by Marcellini \cite{Marcellini-2021} which presents recent results on problems with nonstandard growth. Subsequent to this, the regularity results obtained by Marcellini for the special case of the double phase setting have been refined by a series of papers by Baroni--Colombo--Mingione \cite{Baroni-Colombo-Mingione-2015,Baroni-Colombo-Mingione-2016,Baroni-Colombo-Mingione-2018} and Colombo--Mingione \cite{Colombo-Mingione-2015a, Colombo-Mingione-2015b}. In contrast to \cite{Marcellini-1991} in which $a(\cdot)$ must be Lipschitz for the double phase setting, the papers by Baroni, Colombo  and Mingione only require H\"{o}lder continuity of the weight function $a(\cdot)$. As previously indicated, double phase problems appear in various applications. We refer to the papers by Bahrouni--R\u{a}dulescu--Repov\v{s} \cite{Bahrouni-Radulescu-Repovs-2019} on transonic flows, Benci--D’Avenia--Fortunato--Pisani \cite{Benci-DAvenia-Fortunato-Pisani-2000} on quantum physics, Cherfils--Il'yasov \cite{Cherfils-Ilyasov-2005} for reaction diffusion systems and Zhikov \cite{Zhikov-1995, Zhikov-2011} on the Lavrentiev gap phenomenon, the thermistor problem and the duality theory.

In this paper we examine the existence and multiplicity of solutions to problems with critical and sandwich-type growth represented by
\begin{align} \label{Eq.Sandwich}
	- \operatorname{div}A\left(x,\nabla u\right) = \lambda w(x)|u|^{s-2}u+\theta B\left(x,u\right) \quad \text{in } \Omega,  \quad u=0 \quad \ \text{on } \partial \Omega,
\end{align}
where $\Omega\subset\mathbb{R}^N$ is a bounded domain with  Lipschitz boundary $\partial\Omega$, $\lambda$, $\theta$ are real parameters, $w$ is a suitable weight, while $A\colon \overline{\Omega}\times \mathbb{R}^N \to \mathbb{R}^N$ and $B\colon \overline{\Omega}\times \mathbb{R}\to\mathbb{R}$ are given by
\begin{align}\label{struttura}
	A(x,\xi):=|\xi|^{p-2}\xi+a(x)|\xi|^{q-2}\xi,\quad
	B(x,t) :=b_0(x)|t|^{p^*-2}t+b(x)|t|^{q^*-2}t,
\end{align}
where $r^*:=Nr/(N-r)$ for $r\in\{p,q\}$. Denoting
\begin{align*}
%	\Omega_0:=\{x\in\Omega\colon\,\, a(x)=0\}
%	\quad\text{and}\quad
	\Omega_+:=\{x\in\Omega\colon\,\, a(x)>0\},
\end{align*}
we suppose the following structure conditions on the data of problem \eqref{Eq.Sandwich}:
\begin{enumerate}[label=\textnormal{(H$_\arabic*$)},ref=\textnormal{H$_\arabic*$}]
	\item\label{H1}
		$1<p<s<q<N$, $\dfrac{q}{p}<1+\dfrac{1}{N}$, $0\leq a(\cdot)\in  C^{0,1}(\overline{\Omega})$ and $\Omega_+\ne \emptyset$.
	\item\label{H2}
		$0<b_0(\cdot)\in L^\infty(\Omega)$ and $0\leq b(\cdot)\in L^\infty(\Omega)$ such that $b(x)\leq C a(x)^{\frac{q^*}{q}}$ for a.a.\,$x\in\Omega$ and
		\begin{align}\label{C.I}
			\big\|b^{\frac{1}{q^*}}u\big\|_{q^*}\leq C\big\|a^{\frac{1}{q}}\nabla u\big\|_q,\quad\text{for any } u\in C_c^\infty(\Omega),
		\end{align}
		with some $C>0$.
	\item\label{H3}
		$w\colon\Omega\to\R$ is a measurable function such that  $\left|\left\{x\in\Omega_+\colon\,\, w(x)>0 \right\}\right|>0$, $w\chi_{\{b=0\}}b_0^{-\frac{s}{p^*}}\in L^{\frac{p^*}{p^*-s}}(\Omega)$,   $w\chi_{\{b>0\}}b^{-\frac{s}{q^*}}\in L^{\frac{q^*}{q^*-s}}(\Omega)$ and
		\begin{align}\label{Cond-H3}
			\int_{\Omega}w(x)|u|^s\,\mathrm{d} x\leq C_w\left(\int_{\Omega}a(x)|\nabla u|^q\,\mathrm{d} x\right)^{\frac{s}{q}},
		\end{align}
		for any $u\in C_c^\infty(\Omega)$ with some $C_w>0$. Here, $\chi_E$ denotes the characteristic function of $E$ and  $\chi_{\{b>0\}}b^{-\frac{s}{q^*}}:=0$ on the set $\{b=0\}$.
\end{enumerate}

\begin{remark}
When $\Omega_+=\emptyset$, problem \eqref{Eq.Sandwich} under hypotheses \eqref{H1}--\eqref{H3} reduces to a superlinear $p$-Laplace problem with critical growth, which was studied by Ho--Sim \cite{Ho-Sim-2023} in the setting of a generalized $p(\cdot)$-Laplacian. Note that if $\operatorname{supp}\,(b)\subset \Omega_+$ as in the paper by Colasuonno--Perera \cite{Colasuonno-Perera-2025}, then \eqref{C.I} in condition \eqref{H2} holds true, see \cite[Proposition 4.11]{Colasuonno-Perera-2025}. We also point out that condition \eqref{Cond-H3} in \eqref{H3} is satisfied if $b(x)>0$ for a.a.\,$x\in\Omega$ and $wb^{-\frac{s}{q^*}}\in L^{\frac{q^*}{q^*-s}}(\Omega)$. Indeed, by H\"{o}lder's inequality and \eqref{H2} we have for $u\in C_c^\infty(\Omega)$
	\begin{align*}
		\int_\Omega w(x)|u|^s\,\mathrm{d}x
		\leq \left\|wb^{-\frac{s}{q^*}} \right\|_{\frac{q^*}{q^*-s}} \left\|b^{\frac{1}{q}}u\right\|_{q^*}^s
		\leq C^s \left\|wb^{-\frac{s}{q^*}} \right\|_{\frac{q^*}{q^*-s}} \left(\int_\Omega a(x)|\nabla u|^q\,\mathrm{d}x\right)^{\frac{s}{q}}.
	\end{align*}
\end{remark}

The main feature of problem \eqref{Eq.Sandwich} is the combination of the double phase operator with a right-hand side which consists of a sandwich-type nonlinearity $t \mapsto \lambda w(x)|t|^{s-2}t$ with an indefinite weight $w(\cdot)$ and exponent $s \in (p,q)$, along with a critical growth term $B(\cdot,\cdot)$ given in \eqref{struttura}. Note that the solutions of \eqref{Eq.Sandwich} shall belong to the Musielak-Orlicz Sobolev space $W_0^{1,\mathcal H}(\Omega)$ which arises from the generalized $N$-function $\mathcal{H}\colon \overline{\Omega}\times [0,\infty)\to  [0,\infty)$ given by
\begin{align*}
	\mathcal{H}(x,t):= t^p+a(x)t^q\quad \text{for }(x,t)\in\overline{\Omega}\times[0,\infty).
\end{align*}
From this definition, we see that the double phase operator given in \eqref{double-phase-operator} is a generalization of the $p$-Laplacian and of the $(p,q)$-Laplacian for $p<q$, by setting $a(\cdot)\equiv0$ or $\inf a(\cdot)>0$, respectively. Here, we point out that the critical term $\mathcal{B}(\cdot,\cdot)$ defined by
\begin{align*}
	\mathcal{B}(x,t):=b_0(x)|t|^{p^*} +b(x)|t|^{q^*}\quad \text{for }(x,t)\in \overline{\Omega}\times \R
\end{align*}
is also of double phase type and appears to have the natural critical growth in relation to the operator. However, this term further complicates the study of \eqref{Eq.Sandwich}. Indeed, in our variational approach we have to overcome the lack of compactness of the embedding $W_0^{1,\mathcal H}(\Omega)\hookrightarrow L^{\mathcal B}(\Omega)$. In order to do so we exploit a new concentration and compactness argument inspired by the work of Ha--Ho \cite[Theorem 2.1]{Ha-Ho-2024} taking care of the Luxemburg norms of the Musielak-Orlicz spaces $W_0^{1,\mathcal H}(\Omega)$ and $L^{\mathcal B}(\Omega)$. In this direction, we provide a suitable compactness threshold for the energy functional associated to \eqref{Eq.Sandwich}, which allows us to handle the sandwich-type perturbation with exponent $s\in(p,q)$ by \eqref{H1}. This sandwich-type situation for \eqref{Eq.Sandwich} is more interesting and delicate, since it is strictly related to the double phase growth of the main operator in \eqref{Eq.Sandwich}. In fact, when $a(\cdot)\equiv0$ in \eqref{Eq.Sandwich} and \eqref{struttura}, the sandwich case with $s\in(p,q)$ cannot occur.

Recently, problem \eqref{Eq.Sandwich} has been studied in the papers by Colasuonno--Perera \cite{Colasuonno-Perera-2025} and Farkas--Fiscella--Winkert \cite{Farkas-Fiscella-Winkert-2022} in case $\theta=1$. In \cite[Theorem 1.1]{Farkas-Fiscella-Winkert-2022}, the authors covered the sublinear situation with $s\in(1,p)$, proving the existence of infinitely many solutions of \eqref{Eq.Sandwich} with negative energy. For this, they exploited the Krasnoselskii's genus theory combined with a truncation argument, by assuming very mild hypotheses for the terms in \eqref{struttura}. On the other hand, in \cite[Theorems 2.1 and 2.6]{Colasuonno-Perera-2025} the authors considered \eqref{Eq.Sandwich} in the regime $s\in[p,q^*)$, employing a Br\'{e}zis–Nirenberg-type approach \cite{Brezis-Nirenberg-1983} to establish the existence of a mountain-pass solution. They first showed that there exists a threshold $\beta^*>0$ such that, for any $\beta\in (0,\beta^*)$, every $\textup{(PS)}_\beta$ sequence for the energy functional  admits a subsequence that converges weakly to a nontrivial weak solution of problem \eqref{Eq.Sandwich}. Subsequently, they proved that the critical mountain‑pass energy level lies below this threshold. For this, they required very restrictive assumptions on the dimension $N$, the exponent $p$ and the weights $a(\cdot)$ and $b(\cdot)$ of \eqref{struttura}, see \cite[Theorems 2.1 and 2.6]{Colasuonno-Perera-2025}. In particular, they needed that either $a(\cdot)\equiv0$ on a suitable ball $B_r(x_0)$, or $a(\cdot)\equiv a_0>0$ and $b(\cdot)\equiv b_\infty>0$ are constant on $B_r(x_0)$.

Motivated by the papers of Colasuonno--Perera \cite{Colasuonno-Perera-2025} and Farkas--Fiscella--Winkert \cite{Farkas-Fiscella-Winkert-2022}, we want to provide existence and multiplicity results for solutions of \eqref{Eq.Sandwich} with negative energies under the sandwich case $s\in(p,q)$. In order to state our first main result, we note that the assumption \eqref{H3} implies that
\begin{align*}
	\mathcal{W}_+:=\left\{\phi\in W^{1,\mathcal{H}}_0(\Omega)\colon\,\, \operatorname{supp}\,(\phi)\subset\Omega_+  \text{ and }  \int_\Omega w(x)\phi_+^s\,\mathrm{d}x>0\right\}\neq\emptyset,
\end{align*}
where $\phi_+:= \max\{\phi,0\}$, see Kawohl--Lucia--Prashanth \cite[Proposition 4.2]{Kawohl-Lucia-Prashanth-2007}. We can therefore define
\begin{align}\label{la*}
	\lambda_0:=\inf_{\phi\in\mathcal{W}_+}\, C_0\left(\frac{\int_\Omega a(x)|\nabla \phi|^q\,\mathrm{d} x}{q}\right)^{\frac{s-p}{q-p}}\left(\frac{\int_\Omega |\nabla \phi|^p\,\mathrm{d}x}{p}\right)^{\frac{q-s}{q-p}}\frac{s}{\int_\Omega w(x)\phi_+^s\,\mathrm{d}x},
\end{align}
where $C_0=C_0(p,q,s)$ is given as
\begin{align}\label{C0}
	C_0:=\left(\frac{q-p}{s-p}\right)^{\frac{s-p}{q-p}}\left(\frac{q-p}{q-s}\right)^{\frac{q-s}{q-p}}.
\end{align}

Our first result reads as follows.

\begin{theorem}\label{Theo.NS}
	Let hypotheses \eqref{H1}--\eqref{H3} be satisfied. Then,  for any given $\lambda>\lambda_0$ with $\lambda_0$ as in \eqref{la*}, there exists $\theta_\ast=\theta_\ast(\lambda)>0$ such that for any $\theta\in \left(0,\theta_\ast\right)$, problem~\eqref{Eq.Sandwich} has a nontrivial nonnegative solution with negative energy.
\end{theorem}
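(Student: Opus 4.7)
The idea is to find a nontrivial nonnegative minimizer of a suitably truncated energy functional whose value coincides with the original energy on a neighborhood of the minimizer. Let $J_\theta:W_0^{1,\mathcal H}(\Omega)\to\R$ be the natural functional associated to \eqref{Eq.Sandwich},
\begin{align*}
J_\theta(u)=\frac{1}{p}\int_\Omega|\nabla u|^p\,\mathrm{d}x+\frac{1}{q}\int_\Omega a(x)|\nabla u|^q\,\mathrm{d}x-\frac{\lambda}{s}\int_\Omega w(x)u_+^s\,\mathrm{d}x-\theta\int_\Omega\!\left(\tfrac{b_0(x)}{p^*}u_+^{p^*}+\tfrac{b(x)}{q^*}u_+^{q^*}\right)\mathrm{d}x,
\end{align*}
where only the positive parts appear so that any critical point of $J_\theta$ is automatically nonnegative (test the Euler--Lagrange equation with $-u_-$). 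To deal with the non-coercivity induced by the critical term, fix $R>0$ and a cut-off $\eta\in C^1([0,\infty);[0,1])$ with $\eta\equiv 1$ on $[0,R]$ and $\eta\equiv 0$ on $[2R,\infty)$, and define $\tilde J_\theta$ by multiplying the critical integral by $\eta(\|u\|)$, where $\|\cdot\|$ is the norm of $W_0^{1,\mathcal H}(\Omega)$. Then \eqref{Cond-H3} together with the sandwich condition $s<q$ from \eqref{H1} make $\tilde J_\theta$ coercive and bounded from below on $W_0^{1,\mathcal H}(\Omega)$ for every $\theta>0$.

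Next, I argue that $c_\theta:=\inf_{W_0^{1,\mathcal H}(\Omega)}\tilde J_\theta<0$. The sharp constant $C_0$ in \eqref{C0} is precisely the one that makes the Young-type inequality $At^p+Bt^q\ge C_0\,A^{(q-s)/(q-p)}B^{(s-p)/(q-p)}\,t^s$ an equality at its unique minimizer; applied to the scalar function $t\mapsto J_0(t\phi)$ for $\phi\in\mathcal W_+$, it yields that $\min_{t>0}J_0(t\phi)<0$ if and only if $\lambda>\lambda_0(\phi)$. Since $\lambda>\lambda_0$, I pick $\phi\in\mathcal W_+$ with $\lambda_0(\phi)<\lambda$ and let $t_0>0$ be the associated minimizer, so $J_0(t_0\phi)<0$. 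Fixing $R>\|t_0\phi\|$ from the start and observing that the critical integral is nonnegative, one gets $\tilde J_\theta(t_0\phi)=J_\theta(t_0\phi)\le J_0(t_0\phi)<0$ for every $\theta>0$, hence $c_\theta<0$.

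The analytic heart of the argument is the Palais--Smale condition for $\tilde J_\theta$ at the negative level $c_\theta$. Here I apply the concentration-compactness principle of \cite{Ha-Ho-2024} adapted to the Musielak--Orlicz setting with the double critical term $\mathcal B(x,t)=b_0(x)|t|^{p^*}+b(x)|t|^{q^*}$: any bounded $(\text{PS})_c$ sequence either converges strongly in $W_0^{1,\mathcal H}(\Omega)$ or loses mass through concentration at a finite set of Dirac points, which forces $c\ge c_\ast$ for an explicit threshold $c_\ast>0$ depending on the best Sobolev constants of the embeddings $W_0^{1,\mathcal H}(\Omega)\hookrightarrow L^{p^*}(\Omega;b_0\,\mathrm{d}x)$ and $W_0^{1,\mathcal H}(\Omega)\hookrightarrow L^{q^*}(\Omega;b\,\mathrm{d}x)$. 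Since $c_\theta<0<c_\ast$, concentration is ruled out. Ekeland's variational principle applied to $\tilde J_\theta$ then produces a $(\text{PS})_{c_\theta}$ sequence converging, up to subsequence, to a critical point $u$ of $\tilde J_\theta$ with $\tilde J_\theta(u)=c_\theta<0$; in particular $u\not\equiv 0$ and $u\ge 0$.

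It remains to transfer the information from $\tilde J_\theta$ back to $J_\theta$: I choose $\theta_\ast=\theta_\ast(\lambda)>0$ small enough so that, uniformly in $\theta\in(0,\theta_\ast)$, every critical point $u$ of $\tilde J_\theta$ with $\tilde J_\theta(u)=c_\theta$ satisfies $\|u\|<R$. This follows by combining the uniform (in $\theta$) upper bound $c_\theta\le J_0(t_0\phi)<0$ with the coercivity estimate for $\tilde J_\theta$, which forces the critical correction in the range $\|u\|\sim R$ to dominate $c_\theta$ unless $\theta$ is small enough compared with $R$ and the norms of $b_0,b$. Once $\|u\|<R$, $\tilde J_\theta\equiv J_\theta$ in a neighborhood of $u$, so $u$ is the desired nontrivial nonnegative weak solution of \eqref{Eq.Sandwich} with $J_\theta(u)=c_\theta<0$. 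The hardest step is the Palais--Smale analysis, where the new concentration-compactness principle has to handle both critical exponents $p^*$ and $q^*$ simultaneously in the unbalanced Musielak--Orlicz framework, crucially relying on the structural assumption \eqref{C.I} in \eqref{H2} linking $b$ with $a$.
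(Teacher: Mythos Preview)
Your overall strategy---minimizing a modified functional, showing the infimum is negative via the test function $t_0\phi$, and using Ekeland plus concentration--compactness---is close in spirit to the paper, but the paper takes a cleaner route for this theorem: it minimizes $J_+$ directly on a closed ball $\overline{B_r}\subset W_0^{1,\mathcal H}(\Omega)$, proving (Lemma~\ref{negativo}) that for $\theta$ small one has $\inf_{\partial B_r}J_+\ge\beta>0>\inf_{B_r}J_+$. This barrier on $\partial B_r$ both forces the Ekeland sequence to be bounded and guarantees the limit is an interior (hence free) critical point, so no truncation of the critical term is needed at all. Your cut-off $\eta(\|u\|)$ is instead the device the paper reserves for the multiplicity result (Theorem~\ref{Theo.MS}), where it requires the careful choice $t_1=T_1$, $t_2=T_2$ as roots of an auxiliary scalar function and a dedicated lemma (Lemma~\ref{LPhi<0}) to show that $J_\phi(u)<0\Rightarrow\|u\|<T_1$.

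There is a genuine gap in your Palais--Smale step. You assert that concentration forces $c\ge c_\ast$ for a threshold $c_\ast>0$ depending only on Sobolev constants, and then conclude ``$c_\theta<0<c_\ast$''. This is not correct here: the sandwich term $\lambda\int w|u|^s$ enters the level estimate and lowers the threshold. The paper's compactness lemma (Lemma~\ref{Le.PS}) gives the actual bound
\[
c<C_1\min\{\theta^{-\frac{p}{p^*-p}},\theta^{-\frac{q}{q^*-q}}\}-C_2\lambda^{\frac{q}{q-s}}-C_3\lambda^{\frac{q^*}{q^*-s}}\theta^{-\frac{s}{q^*-s}},
\]
which is \emph{not} automatically positive; positivity requires a further smallness condition $\theta<\widehat\theta_*(\lambda)$ (see the paragraph opening the proof of Theorem~\ref{Theo.NS}). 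So ``$c_\theta<0$'' alone does not rule out concentration; you must first restrict $\theta$ to make the right-hand side above positive. A second, related issue is that your truncated functional $\tilde J_\theta$ has derivative containing an extra term $\eta'(\|u\|)\cdot(\text{critical integral})\cdot\partial\|u\|$, so a $(\text{PS})$ sequence for $\tilde J_\theta$ is not automatically a $(\text{PS})$ sequence for $J_\theta$; in the paper's truncation argument this is resolved by first proving that negative levels force $\|u_n\|<T_1$ (Lemma~\ref{LPhi<0}), so the cut-off is inactive along the sequence. Your sketch defers this to the very end (``choose $\theta_\ast$ so that $\|u\|<R$''), but you need it \emph{before} you can invoke the concentration--compactness principle for $J_\theta$.
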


Theorem \ref{Theo.NS} generalizes the $(p,q)$-Laplacian situation studied by Ho--Sim \cite[Theorem 1.1]{Ho-Sim-2021} in a nontrivial way. Indeed, in \cite[Theorem 1.1]{Ho-Sim-2021} they presented their problem in the trivial intersection space $W_0^{1,p}(\Omega)\cap W_0^{1,q}(\Omega)=W_0^{1,q}(\Omega)$ with $p<q$. In this way, they minimized their energy functional on the ball
\begin{align*}
	B_r=\left\{u\in W_0^{1,q}(\Omega)\colon\,\, \|\nabla u\|_q\leq r\right\},
\end{align*}
disregarding the norm $\|\nabla u\|_p$. In Theorem \ref{Theo.NS}, we still apply a minimization argument, combined with Ekeland's variational principle, but taking care of the Luxemburg type norm of the Musielak-Orlicz Sobolev space $W_0^{1,\mathcal H}(\Omega)$. For this, we need a suitable compactness threshold for the validity of the Palais-Smale condition of our energy functional.

Our second result is devoted to the multiplicity of solutions to problem \eqref{Eq.Sandwich} stated in the following theorem.

\begin{theorem}\label{Theo.MS}
	Let hypotheses \eqref{H1}--\eqref{H3} be satisfied and assume that there exists a ball $B\subset\Omega_+$ such that $w(x)>0$ for a.a.\,$x\in B$. Then, there exists $\{\theta_j\}_{j\in\N}$ with $0<\theta_j<\theta_{j+1}$, such that for any $j\in\N$ and with $\theta\in \left(0,\theta_j\right)$, there exist $\lambda_\star$, $\lambda^\star>0$  with $\lambda_\star<\lambda^\star$ and possibly depending on $\theta$, such that for any $\lambda\in\left(\lambda_\star,\lambda^\star\right)$, problem \eqref{Eq.Sandwich} admits at least $j$ pairs of distinct solutions with negative energy.
\end{theorem}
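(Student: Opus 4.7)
The plan is to use a truncation combined with Krasnoselskii's genus on the Musielak--Orlicz Sobolev space $W_0^{1,\mathcal H}(\Omega)$, adapting the strategy of \cite{Farkas-Fiscella-Winkert-2022} to the sandwich regime $s\in(p,q)$ and relying on the new concentration--compactness principle (inspired by \cite{Ha-Ho-2024}) to control the critical double-phase term $\mathcal B(\cdot,\cdot)$. The energy functional
\begin{equation*}
I_{\lambda,\theta}(u)=\int_\Omega\!\Big(\tfrac{|\nabla u|^p}{p}+a(x)\tfrac{|\nabla u|^q}{q}\Big)\mathrm{d}x-\tfrac{\lambda}{s}\!\int_\Omega\! w(x)|u|^s\,\mathrm{d}x-\theta\!\int_\Omega\!\Big(\tfrac{b_0(x)}{p^*}|u|^{p^*}+\tfrac{b(x)}{q^*}|u|^{q^*}\Big)\mathrm{d}x
\end{equation*}
is even and $C^1$ but unbounded below, so I would pick a radial cutoff $\tau\in C^1(\R_+)$ with $\tau\equiv 1$ on $[0,1]$ and $\tau\equiv 0$ on $[2,\infty)$, and work with the truncated functional $J_{\lambda,\theta}$ obtained by multiplying the critical integral by $\tau(\rho(u)/R)$, where $\rho$ denotes the modular on $W_0^{1,\mathcal H}(\Omega)$ and $R=R(\lambda,\theta)$ is a carefully chosen radius. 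For $R$ small enough, $J_{\lambda,\theta}$ is even, coercive, bounded below, and coincides with $I_{\lambda,\theta}$ on $\{\rho(u)\le R\}$; for $\theta$ below a universal threshold, the concentration--compactness principle then yields the Palais--Smale condition at every negative level.

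I would then set up the minimax levels
\begin{equation*}
c_j:=\inf_{A\in\Sigma_j}\sup_{u\in A}J_{\lambda,\theta}(u),\qquad \Sigma_j:=\{A\subset W_0^{1,\mathcal H}(\Omega)\setminus\{0\}:A=-A,\ A\text{ compact},\ \gamma(A)\ge j\},
\end{equation*}
with $\gamma$ the Krasnoselskii genus. The standard deformation argument together with the PS condition below $0$ shows that every negative $c_j$ is a critical value of $J_{\lambda,\theta}$ and that its critical set has genus at least equal to the multiplicity of $c_j$; the negativity of the level forces $\rho(u)<R$, so these are genuine solutions of \eqref{Eq.Sandwich} with negative energy. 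The problem therefore reduces to exhibiting, for every prescribed $j$, an element of $\Sigma_j$ on which $J_{\lambda,\theta}$ is strictly negative.

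For this I would exploit the ball $B\subset\Omega_+$ with $w>0$ a.e.\ assumed in the theorem: pick linearly independent $\phi_1,\dots,\phi_j\in C_c^\infty(B)$ and set $E_j:=\mathrm{span}\{\phi_1,\dots,\phi_j\}$. Norm equivalence on the finite-dimensional space $E_j$ yields
\begin{equation*}
I_{\lambda,\theta}(t\phi)\le M_1\tfrac{t^p}{p}+M_2\tfrac{t^q}{q}-\lambda c_0\tfrac{t^s}{s}+\theta M_3\bigl(t^{p^*}+t^{q^*}\bigr),
\end{equation*}
uniformly for $\phi\in E_j$ with $\|\phi\|=1$, with constants $M_1,M_2,M_3,c_0>0$ depending only on $E_j$. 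Because $p<s<q<p^*<q^*$, the polynomial $M_1 t^p/p+M_2 t^q/q-\lambda c_0 t^s/s$ is strictly negative on a nonempty open interval precisely once $\lambda$ exceeds a threshold $\lambda_\star=\lambda_\star(j,\theta)$; picking a scale $t_*=t_*(\lambda)$ inside that interval and imposing that the corresponding sphere $\{t_*\phi:\phi\in E_j,\ \|\phi\|=1\}$ lies in $\{\rho<R\}$, so that the $\theta$-critical correction stays subdominant, determines the upper bound $\lambda^\star=\lambda^\star(j,\theta)$ together with the admissible threshold $\theta_j$. For $\theta<\theta_j$ and $\lambda\in(\lambda_\star,\lambda^\star)$, this sphere has genus exactly $j$ and lies in $\{J_{\lambda,\theta}<0\}$, so $c_1\le\cdots\le c_j<0$ and at least $j$ pairs of solutions with negative energy result. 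The technical heart of the argument, and the main obstacle, is the quantitative calibration of $R$, $\theta_j$ and the window $(\lambda_\star,\lambda^\star)$: the inhomogeneity of the double-phase modular (both $t^p$ and $t^q$ are active on $E_j$) together with the modular-versus-Luxemburg-norm conversion on $W_0^{1,\mathcal H}(\Omega)$ forces a delicate choice of the scaling $t_*$, while the PS condition has to be preserved uniformly as the genus level $j$ grows.
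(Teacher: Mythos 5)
Your overall architecture (truncate the critical term, run Krasnoselskii genus on finite-dimensional symmetric spheres supported in $B$, use the concentration--compactness lemma to get compactness at negative levels) is the same as the paper's, but the calibration you sketch --- which you correctly identify as the technical heart --- goes in the wrong direction at the decisive point, and this is a genuine gap. You claim that ``for $R$ small enough'' the truncated functional coincides with the free functional wherever it matters and that negativity of the level forces $\rho(u)<R$. This is false for small $R$: since here $\lambda$ must be taken \emph{large} (your $\lambda_\star(j,\theta)$, the paper's $d_j$ in \eqref{PT2.dk}, grows with $j$), one can take $u=t\psi$ with $\psi\in\mathcal W_+$ and $t\sim\lambda^{1/(q-s)}$, for which $\tfrac{t^p}{p}\|\nabla\psi\|_p^p+\tfrac{t^q}{q}\|\nabla\psi\|_{a,q}^q<\tfrac{\lambda t^s}{s}\int_\Omega w\psi_+^s\,\mathrm{d}x$ while the modular is of order $\lambda^{q/(q-s)}\gg R$; there the cut-off has killed the critical term and the truncated energy is negative. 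So negative levels of the truncated functional do \emph{not} live inside $\{\rho(u)\le R\}$ for a fixed small $R$, and both of your key assertions (negative critical values give solutions of \eqref{Eq.Sandwich}; PS sequences at negative levels reduce to the free functional so the concentration--compactness lemma applies) break down. The truncation radii must instead \emph{grow with} $\lambda$: the paper takes them to be the two roots $T_1<T_2$ of an auxiliary function $f_{\lambda,\theta}$, with $T_1>\big(a_0^{-\delta}\lambda\big)^{1/(\delta(p-m))}$ as in \eqref{PT2.T1}, and the very existence of the positive bump of $f_{\lambda,\theta}$ on $(T_1,T_2)$ is what produces one of the two upper bounds on $\lambda$, namely $\lambda<\widetilde\lambda(\theta)$ in \eqref{PT2.la0}; this is the content of Lemma \ref{LPhi<0}, which your proposal has no substitute for.

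A second, related imprecision: the Palais--Smale condition at negative levels is not obtained ``for $\theta$ below a universal threshold.'' Lemma \ref{Le.PS} applies to bounded PS sequences of $J$ and requires the level to lie below the threshold in \eqref{PS_c2}, whose positivity is a \emph{joint} condition $\lambda<\overline\lambda(\theta)\sim\theta^{-(q-s)/(q^*-q)}$ (see \eqref{PT2.bala}); this constraint, together with $\lambda<\widetilde\lambda(\theta)$, is what actually defines $\lambda^\star=\min\{\widetilde\lambda,\overline\lambda\}$, and $\theta_j$ is then chosen so small that the window $\big(\max\{a_0^\delta,d_j\},\lambda^\star\big)$ is nonempty. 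By contrast, your proposed source of $\lambda^\star$ (forcing the test sphere $\{t_*\phi\}$ into $\{\rho<R\}$ so the $\theta$-term stays subdominant) is both insufficient --- it addresses neither of the two constraints above --- and unnecessary: since the truncated critical contribution is nonpositive, in the genus step you may simply drop it, which is why the paper's $d_j$ is independent of $\theta$ and of the truncation. Repairing your argument essentially means importing the paper's $f_{\lambda,\theta}$-analysis and the threshold positivity condition, i.e.\ the parts your sketch currently replaces with the incorrect ``$R$ small'' and ``$\theta$ small'' heuristics.
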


The proof of Theorem \ref{Theo.MS} relies on a careful combination of variational and topological tools, such as truncation techniques and Krasnoselskii's genus theory, similar to the work by Farkas--Fiscella--Winkert \cite[Theorem 1.1]{Farkas-Fiscella-Winkert-2022} under the sublinear situation. However, the sandwich perturbation with exponent $s\in(p,q)$ does not allow us to provide the existence of infinitely many solutions for \eqref{Eq.Sandwich}. Indeed, as in \cite[Theorem 1.1]{Farkas-Fiscella-Winkert-2022}, we can construct a monotone and non-decreasing sequence $\{c_j\}_{j\in\mathbb{N}}$ of critical values by the genus theory. However, we can only guarantee that the values $c_1\leq c_2\leq\ldots\leq c_j$ are negative when $\theta<\theta_j$ and $\lambda>\lambda_*$, with possibly $\theta_j\to0$ and $\lambda_*=\lambda_*(\theta_j)\to\infty$ as $j\to\infty$. This is the crucial difference with respect to the sublinear case in Theorem 1.1 of \cite{Farkas-Fiscella-Winkert-2022}, where the $\{c_j\}_{j\in\N}$ are all negative when $\lambda<\lambda^*$ is sufficiently small.

We emphasize that Theorem \ref{Theo.MS} completes the picture of the paper by Farkas--Fiscella--Winkert \cite[Theorem 1.1]{Farkas-Fiscella-Winkert-2022} and generalizes the multiplicity results obtained by Baldelli--Brizi--Filippucci  \cite[Theorem 1]{Baldelli-Brizi-Filippucci-2021} and by Ho--Sim \cite[Theorem 1.3]{Ho-Sim-2023}. Indeed, in \cite[Theorem 1]{Baldelli-Brizi-Filippucci-2021} they dealt with a $(p,q)$-Laplacian situation, i.e., with $\inf a(\cdot)>0$ while in \cite[Theorem 1.3]{Ho-Sim-2023} they considered a more general operator which involves two sides, one given by the $p(\cdot)$-Laplacian and the other one given an operator set on a suitable ball $B$. Then, in \cite[Theorem 1.3]{Ho-Sim-2023} they cover a sandwich-type case with $s<p^-:=\min p(x)$ and $s>q_B$, where $q_B$ is the exponent of the second side. Thus, they do not cover a truly sandwich case for the $p(\cdot)$-Laplacian, i.e.\,with $\min p(x)=:p^-<s<p^+:=\max p(x)$. In Theorem \ref{Theo.MS}, even working with Luxemburg norms, we can cover a complete sandwich perturbation with $s\in(p,q)$ in \eqref{Eq.Sandwich}. Finally, we mention related papers dealing with critical growth for double phase problems, see the works by Arora--Fiscella--Mukherjee--Winkert \cite{Arora-Fiscella-Mukherjee-Winkert-2022}, Farkas--Winkert \cite{Farkas-Winkert-2021}, Feng--Bai \cite{Feng-Bai-2024}, Ho--Kim--Zhang \cite{Ho-Kim-Zhang-2024}, Kumar--R\u{a}dulescu--Sreenadh \cite{Kumar-Radulescu-Sreenadh-2020}, Liu--Papageorgiou \cite{Liu-Papageorgiou-2021}, Papageorgiou--Vetro--Winkert \cite{Papageorgiou-Vetro-Winkert-2024,Papageorgiou-Vetro-Winkert-2023}, and Papageorgiou--Zhang \cite{Papageorgiou-Zhang-2020}, see also the paper by Ho--Perera--Sim \cite{Ho-Perera-Sim-2023} on the Br\'ezis-Nirenberg problem for the $(p,q)$-Laplacian. Note that the methods and techniques used in these papers are different from the ones applied in our work.

The paper is organized as follows. In Section \ref{Section_2}, we introduce the notation used along the paper and some technical properties of the Musielak-Orlicz spaces. In Section \ref{Section_3}, we prove the compactness property of the energy functional related to \eqref{Eq.Sandwich} while in Sections \ref{Section_4} and \ref{Section_5}, we prove Theorems \ref{Theo.NS} and \ref{Theo.MS}, respectively.

%*******************************************************************************
\section{Preliminaries and Notation}\label{Section_2}
%*******************************************************************************

In this section, we recall the main properties of Musielak-Orlicz and Musielak-Orlicz Sobolev spaces as well as of the double phase operator. Most of the results are taken from the papers by Colasuonno--Squassina \cite{Colasuonno-Squassina-2016}, Crespo-Blanco--Gasi\'nski--Harjulehto--Winkert \cite{Crespo-Blanco-Gasinski-Harjulehto-Winkert-2022}, Ho--Winkert \cite{Ho-Winkert-2023} and Liu--Dai \cite{Liu-Dai-2018}, see also the monographs by Chlebicka--Gwiazda--\'{S}wierczewska-Gwiazda--Wr\'{o}blewska-Kami\'{n}ska \cite{Chlebicka-Gwiazda-Swierczewska-Gwiazda-Wroblewska-Kaminska-2021},  Diening--Harjulehto--H\"{a}st\"{o}--R\r{u}\v{z}i\v{c}ka \cite{Diening-Harjulehto-Hasto-Ruzicka-2011}, Harjulehto--H\"{a}st\"{o} \cite{Harjulehto-Hasto-2019}, and Papageorgiou--Winkert \cite{Papageorgiou-Winkert-2024}.

First, we want to introduce the underlying generalized $N$-functions describing the behavior of the function $A\colon \overline{\Omega}\times \mathbb{R}^N \to \mathbb{R}^N$ and $B\colon \overline{\Omega}\times \mathbb{R}\to\mathbb{R}$ given in \eqref{struttura}. For this purpose, let $1<\alpha<\beta$, $0< c(\cdot) \in L^{1}(\Omega)$ and $0 \leq d(\cdot) \in L^{1}(\Omega)$. We define the $N$-function $\Phi\colon \overline{\Omega}\times  [0,\infty)\to [0,\infty)$ as
\begin{align*}
	\Phi(x,t):=c(x)t^{\alpha}+d(x)t^{\beta}
	\quad\text{for } (x,t)\in \overline{\Omega}\times [0,\infty),
\end{align*}
while the associated modular $\rho_{\Phi}$ to $\Phi$  is given by
\begin{align}\label{modular-Lp}
	\rho_{\Phi}(u):= \int_{\Omega} \Phi (x,|u|)\,\mathrm{d}x.
\end{align}
Denoting by $M(\Omega)$ the set of all measurable functions on $\Omega$,  the corresponding Musielak-Orlicz space $L^{\Phi}(\Omega)$ is defined
by
\begin{align*}
	L^{\Phi}(\Omega):=\left \{u\in M(\Omega)\colon\,\, \rho_{\Phi}(u) < \infty \right\},
\end{align*}
endowed with the Luxemburg norm
\begin{align*}
	\|u\|_{\Phi}:= \inf \left \{ {\tau} >0 \colon\,\, \rho_{\Phi}\left(\frac{u}{\tau}\right) \leq 1  \right \}.
\end{align*}

The following proposition gives the relation between the modular $\rho_{\Phi}(\cdot)$ and its norm $\|\cdot\|_{\Phi}$, see  Crespo-Blanco--Gasi\'nski--Harjulehto--Winkert \cite[Proposition 2.13]{Crespo-Blanco-Gasinski-Harjulehto-Winkert-2022} for a detailed proof.

\begin{proposition}\label{P.m-n}
	Let $1<\alpha<\beta$, $0< c(\cdot) \in L^{1}(\Omega)$, $0 \leq d(\cdot) \in L^{1}(\Omega)$, $\lambda>0$, and $u\in L^{\Phi}(\Omega)$ while $\rho_{\Phi}(\cdot)$ is as in \eqref{modular-Lp}. Then, the following hold:
	\begin{enumerate}
		\item[\textnormal{(i)}]
			If $u\neq 0$, then $\|u\|_{\Phi}=\lambda$ if and only if $ \rho_{\Phi}(\frac{u}{\lambda})=1$.
		\item[\textnormal{(ii)}]
			$\|u\|_{\Phi}<1$ (resp.\,$>1$, $=1$) if and only if $ \rho_{\Phi}(u)<1$ (resp.\,$>1$, $=1$).
		\item[\textnormal{(iii)}]
			If $\|u\|_{\Phi}<1$, then $\|u\|_{\Phi}^{\beta}\leqslant \rho_{\Phi}(u)\leqslant\|u\|_{\Phi}^{\alpha}$.
		\item[\textnormal{(iv)}]
			If $\|u\|_{\Phi}>1$, then $\|u\|_{\Phi}^{\alpha}\leqslant \rho_{\Phi}(u)\leqslant\|u\|_{\Phi}^{\beta}$.
		\item[\textnormal{(v)}]
			$\|u\|_{\Phi}\to 0$ if and only if $\rho_{\Phi}(u)\to 0$.
		\item[\textnormal{(vi)}]
			$\|u\|_{\Phi}\to \infty$ if and only if $\rho_{\Phi}(u)\to \infty$.
	\end{enumerate}
\end{proposition}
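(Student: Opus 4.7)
The plan is to derive all six items from a single pointwise scaling inequality for $\Phi(x,t) = c(x)t^\alpha + d(x)t^\beta$. For every $x \in \overline\Omega$, $t \geq 0$ and $\lambda > 0$ I would record
\begin{equation*}
\min\{\lambda^\alpha, \lambda^\beta\}\, \Phi(x,t) \leq \Phi(x,\lambda t) \leq \max\{\lambda^\alpha, \lambda^\beta\}\, \Phi(x,t),
\end{equation*}
which is immediate from factoring $\Phi(x,\lambda t) = c(x)\lambda^\alpha t^\alpha + d(x)\lambda^\beta t^\beta$ and noting that $\alpha < \beta$ forces $\lambda^\beta$ to be the smaller (resp.\ larger) of the two powers according as $\lambda \leq 1$ (resp.\ $\lambda \geq 1$). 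Integrating over $\Omega$ converts this into the corresponding modular inequality, which will be the workhorse for everything below.

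For (i), I would fix $u \neq 0$ in $L^\Phi(\Omega)$ and analyse $g(\tau) := \rho_\Phi(u/\tau)$ on $(0,\infty)$. The pointwise scaling combined with dominated convergence (which applies since $\rho_\Phi(u) < \infty$) yields continuity of $g$; strict monotonicity of $t \mapsto \Phi(x,t)$ together with $|u| > 0$ on a set of positive measure yields strict monotonicity of $g$; and the scaling bounds give $g(\tau) \to \infty$ as $\tau \to 0^+$ and $g(\tau) \to 0$ as $\tau \to \infty$. A unique $\tau_0 > 0$ therefore satisfies $g(\tau_0) = 1$, and by definition of the Luxemburg infimum this $\tau_0$ coincides with $\|u\|_\Phi$. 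Item (ii) follows at once: $\|u\|_\Phi = 1$ forces $\rho_\Phi(u) = g(1) = 1$ by (i), and the strict monotonicity of $g$ handles $\|u\|_\Phi < 1$ and $\|u\|_\Phi > 1$.

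For (iii) and (iv), I would set $\lambda := \|u\|_\Phi$ and $v := u/\lambda$, so that $\rho_\Phi(v) = 1$ by (i), and then substitute into the scaling inequality and integrate. In the regime $\lambda \leq 1$ this produces $\lambda^\beta \leq \rho_\Phi(u) \leq \lambda^\alpha$, i.e.\ (iii); in the regime $\lambda \geq 1$ the reverse ordering of the powers produces $\lambda^\alpha \leq \rho_\Phi(u) \leq \lambda^\beta$, i.e.\ (iv). Items (v) and (vi) then fall out by sandwiching: from (iii), $\|u_n\|_\Phi \to 0$ forces $\rho_\Phi(u_n) \leq \|u_n\|_\Phi^\alpha \to 0$, and conversely $\rho_\Phi(u_n) \to 0$ forces $\|u_n\|_\Phi < 1$ eventually and hence $\|u_n\|_\Phi^\beta \leq \rho_\Phi(u_n) \to 0$; item (vi) is the symmetric argument using (iv).

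I do not expect a genuine obstacle: the proposition is a standard modular/Luxemburg-norm correspondence whose entire content is captured by the pointwise scaling identity and the continuity plus strict monotonicity of $g$. The only care I would take is in verifying the boundary behaviour $g(\tau) \to 0, \infty$, where I would combine dominated convergence (legal because $u \in L^\Phi(\Omega)$ means $\rho_\Phi(u) < \infty$) with the very modular scaling bounds already established, thus keeping the argument self-contained and bypassing any casework beyond the dichotomy $\lambda \lessgtr 1$.
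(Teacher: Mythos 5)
Your argument is correct: the pointwise scaling inequality for $\Phi(x,\lambda t)$, together with the continuity, strict monotonicity and limit behaviour of $\tau\mapsto\rho_\Phi(u/\tau)$, is exactly the standard proof of the modular--Luxemburg-norm correspondence, and the paper itself does not reprove this but cites Proposition 2.13 of Crespo-Blanco--Gasi\'nski--Harjulehto--Winkert, whose proof follows the same lines. The only (harmless) omission is the trivial case $u=0$ in (ii)--(iii), which holds by inspection.
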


Next, we assume that \eqref{H1} and \eqref{H2} are fulfilled, so that we can set
\begin{align*}
	\begin{aligned}
		\mathcal{H}(x,t) & :=t^{p} +a(x)t^{q} && \text{for } (x,t)\in \overline{\Omega}\times [0,\infty), \\
		\mathcal{B}(x,t) & :=b_0(x)|t|^{p^*} +b(x)|t|^{q^*} && \text{for } (x,t)\in \overline{\Omega}\times \R.
	\end{aligned}
\end{align*}
We define the Musielak-Orlicz Sobolev space $W^{1,\mathcal{H}}(\Omega)$ as
\begin{align*}
	W^{1,\mathcal{H}}(\Omega)
	=\left \{u \in L^{\mathcal{H}}(\Omega)\colon\,\, |\nabla u| \in L^{\mathcal{H}}(\Omega) \right \}
\end{align*}
equipped with the norm
\begin{align*}
	\|u\|_{1,\mathcal{H}} = \|u\|_{\mathcal{H}}+\|\nabla u\|_{\mathcal{H}},
\end{align*}
where $\|\nabla u\|_{\mathcal{H}}=\| \, |\nabla u| \,\|_{\mathcal{H}}$. Furthermore, we denote by $W^{1,\mathcal{H}}_0(\Omega)$ the completion of $C^\infty_c(\Omega)$ in $W^{1,\mathcal{H}}(\Omega)$. In view of Colasuonno--Squassina \cite[Proposition 2.14]{Colasuonno-Squassina-2016}, we know that $L^{\mathcal{H}}(\Omega)$, $W^{1,\mathcal{H}}(\Omega)$ and $W^{1,\mathcal{H}}_0(\Omega)$ are separable and reflexive Banach spaces. In addition, the Poincar\'e inequality, namely
\begin{align*}
	\|u\|_{\mathcal{H}} \leq C\|\nabla u\|_{\mathcal{H}}\quad\text{for any
	} u \in W^{1,\mathcal{H}}_0(\Omega),
\end{align*}
holds true, see Colasuonno--Squassina \cite[Proposition 2.18 (iv)]{Colasuonno-Squassina-2016} or Crespo-Blanco--Gasi\'nski--Harjulehto--Winkert \cite[Proposition 2.19]{Crespo-Blanco-Gasinski-Harjulehto-Winkert-2022} under the weaker assumption $q<p^*$ instead of $\frac{q}{p}<1+\frac{1}{N}$. Based on this, we can equip the space $W^{1,\mathcal{H}}_0(\Omega)$ with the equivalent norm
\begin{align*}
	\|\cdot\|:=\|\nabla \cdot\|_{\mathcal{H}}.
\end{align*}
We have the following embedding results, see Colasuonno--Squassina \cite[Proposition 2.15]{Colasuonno-Squassina-2016}.

\begin{proposition}\label{prop-embeddings}
	Let hypothesis \eqref{H1} be satisfied. Then, the following hold:
	\begin{enumerate}
		\item[\textnormal{(i)}]
		$W_{0}^{1,\mathcal{H}}(\Omega)\hookrightarrow W_{0}^{1,p}(\Omega)$ is continuous;
		\item[\textnormal{(ii)}]
		$W_{0}^{1,\mathcal{H}}(\Omega)\hookrightarrow L^{p^{*}}(\Omega)$ is continuous.
		\item[\textnormal{(iii)}]
		$W_{0}^{1,\mathcal{H}}(\Omega)\hookrightarrow L^{r}(\Omega)$ is continuous and compact for all $1\leq r<p^*$.
	\end{enumerate}
\end{proposition}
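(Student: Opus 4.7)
The plan is to establish (i) by a direct modular comparison, then deduce (ii) and (iii) by composing with the classical Sobolev and Rellich--Kondrachov theorems, respectively.

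For (i), I would start from the pointwise inequality $t^p\leq \mathcal{H}(x,t)=t^p+a(x)t^q$, valid on $\overline{\Omega}\times[0,\infty)$ because $a(\cdot)\geq 0$ by \eqref{H1}. Given any nonzero $u\in L^{\mathcal{H}}(\Omega)$, Proposition \ref{P.m-n}(i) gives $\rho_{\mathcal{H}}(u/\|u\|_{\mathcal{H}})=1$, so
\begin{equation*}
	\int_\Omega \frac{|u|^p}{\|u\|_{\mathcal{H}}^p}\,\mathrm{d}x
	\leq \rho_{\mathcal{H}}\!\left(\frac{u}{\|u\|_{\mathcal{H}}}\right)=1,
\end{equation*}
whence $\|u\|_p\leq \|u\|_{\mathcal{H}}$ and the continuous embedding $L^{\mathcal{H}}(\Omega)\hookrightarrow L^p(\Omega)$ follows. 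Applying the same bound to $|\nabla u|$, I obtain $\|u\|_p+\|\nabla u\|_p\leq \|u\|_{\mathcal{H}}+\|\nabla u\|_{\mathcal{H}}=\|u\|_{1,\mathcal{H}}$. Since $W_0^{1,\mathcal{H}}(\Omega)$ is defined as the closure of $C_c^\infty(\Omega)$ in $W^{1,\mathcal{H}}(\Omega)$, any Cauchy sequence in $W_0^{1,\mathcal{H}}(\Omega)$ remains Cauchy in $W^{1,p}(\Omega)$ and its limit necessarily lies in $W_0^{1,p}(\Omega)$; this delivers the continuous embedding claimed in (i).

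Part (ii) then follows by composing (i) with the classical Sobolev embedding $W_0^{1,p}(\Omega)\hookrightarrow L^{p^*}(\Omega)$, which is valid since $p<N$ by \eqref{H1}. Likewise, part (iii) follows by composing (i) with the Rellich--Kondrachov compact embedding $W_0^{1,p}(\Omega)\hookrightarrow L^r(\Omega)$ for every $1\leq r<p^*$, using that the composition of a continuous linear map with a compact operator is compact.

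I do not expect a serious obstacle here: everything reduces to the pointwise domination $\mathcal{H}(x,t)\geq t^p$ together with standard facts about $W_0^{1,p}(\Omega)$. The one small subtlety is the passage to the zero-trace subspaces, which is handled by the density of $C_c^\infty(\Omega)$ combined with the completeness of $W_0^{1,p}(\Omega)$. It is worth noting that the stronger condition $q/p<1+1/N$ from \eqref{H1} is not invoked in this proposition; the embeddings depend only on the inequality $\mathcal{H}(x,t)\geq t^p$ and on the assumption $p<N$, so the same argument would work under the weaker hypothesis $q<p^*$.
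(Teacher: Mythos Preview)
Your argument is correct. The paper does not supply its own proof of this proposition; it simply cites Colasuonno--Squassina \cite[Proposition 2.15]{Colasuonno-Squassina-2016}. Your direct route via the pointwise domination $\mathcal{H}(x,t)\geq t^p$ is exactly the standard mechanism behind that cited result, so there is no substantive difference in strategy, only in that you spell out what the paper leaves to a reference. Your closing remark is also accurate: the condition $q/p<1+1/N$ plays no role here; only $a(\cdot)\geq 0$ and $p<N$ are used.
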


The next proposition can be found in the work by Ho--Winkert \cite[Proposition 3.7]{Ho-Winkert-2023} and plays a key role to handle the critical Sobolev term in problem \eqref{Eq.Sandwich}.

\begin{proposition}\label{prop_CE1}
	Let hypothesis \eqref{H1} be satisfied and
	\begin{align*}
		\mathcal{G}(x,t):=|t|^{k}+a(x)^{\frac{m}{q}}|t|^{m}	\quad\text{for } (x,t)\in \overline{\Omega}\times \R,
	\end{align*}
	where $1\leq k\leq p^*$ and $1\leq m\leq q^*$. Then, we have the continuous embedding
	\begin{align}\label{Prop-S-E}
		W^{1,\mathcal{H}}(\Omega)
		\hookrightarrow  L^{\mathcal{G}}(\Omega).
	\end{align}
	Furthermore, if $k<p^*$ and $m< q^*$, then the embedding in \eqref{Prop-S-E} is compact. In particular, it holds
	\begin{align*}
		W^{1,\mathcal{H}}(\Omega)
		\hookrightarrow L^{\mathcal{H}}(\Omega) \quad\text{compactly}.
	\end{align*}
\end{proposition}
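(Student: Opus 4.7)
The plan is to bound the modular $\rho_{\mathcal{G}}(u)=\int_{\Omega}|u|^{k}\,\mathrm{d}x+\int_{\Omega}a(x)^{m/q}|u|^{m}\,\mathrm{d}x$ in terms of $\|u\|_{1,\mathcal{H}}$ and then convert to a norm bound via Proposition~\ref{P.m-n}. Since $|\Omega|<\infty$, Hölder's inequality reduces matters to the ``critical'' case $k=p^{*}$, $m=q^{*}$:
\begin{align*}
\int_{\Omega}|u|^{k}\,\mathrm{d}x & \le |\Omega|^{1-k/p^{*}}\bigg(\int_{\Omega}|u|^{p^{*}}\,\mathrm{d}x\bigg)^{k/p^{*}}, \\
\int_{\Omega}a(x)^{m/q}|u|^{m}\,\mathrm{d}x & \le |\Omega|^{1-m/q^{*}}\bigg(\int_{\Omega}a(x)^{q^{*}/q}|u|^{q^{*}}\,\mathrm{d}x\bigg)^{m/q^{*}}.
\end{align*}
The first integral on the right is controlled by $\|u\|_{1,\mathcal{H}}$ via the standard embedding $W^{1,\mathcal{H}}(\Omega)\hookrightarrow W^{1,p}(\Omega)\hookrightarrow L^{p^{*}}(\Omega)$ on the bounded Lipschitz domain $\Omega$.

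For the remaining weighted critical integral, I would invoke the Musielak--Orlicz Sobolev embedding for the $N$-function $\mathcal{H}(x,t)=t^{p}+a(x)t^{q}$. Its Sobolev conjugate, defined via the Peetre-type formula $\mathcal{H}_{*}^{-1}(x,s)=\int_{0}^{s}\mathcal{H}^{-1}(x,\tau)\,\tau^{-1-1/N}\,\mathrm{d}\tau$, is pointwise comparable to $t^{p^{*}}+a(x)^{q^{*}/q}t^{q^{*}}$; the condition $q/p<1+1/N$ in \eqref{H1} is precisely what ensures that $\mathcal{H}_{*}$ is a genuine $N$-function and that $W^{1,\mathcal{H}}(\Omega)\hookrightarrow L^{\mathcal{H}_{*}}(\Omega)$ is continuous. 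Putting the three modular estimates together and applying Proposition~\ref{P.m-n} yields the continuous embedding \eqref{Prop-S-E}.

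For the compactness claim, let $u_{n}\rightharpoonup u$ in $W^{1,\mathcal{H}}(\Omega)$ with $k<p^{*}$ and $m<q^{*}$. By the Rellich--Kondrachov theorem $u_{n}\to u$ in $L^{r}(\Omega)$ for every $r<p^{*}$, so along a subsequence $u_{n}\to u$ almost everywhere. The continuous part already established shows that $\{|u_{n}|^{k}\}$ is bounded in $L^{p^{*}/k}(\Omega)$ and $\{a^{m/q}|u_{n}|^{m}\}$ in $L^{q^{*}/m}(\Omega)$, both with exponents strictly greater than one, so these two sequences are uniformly integrable. Vitali's convergence theorem then gives $\rho_{\mathcal{G}}(u_{n}-u)\to 0$ and hence $u_{n}\to u$ in $L^{\mathcal{G}}(\Omega)$ by Proposition~\ref{P.m-n}(v).

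The genuine obstacle is the weighted critical estimate on $\int_{\Omega}a(x)^{q^{*}/q}|u|^{q^{*}}\,\mathrm{d}x$: a naive attempt based on the substitution $v=a^{1/q}u$ is wrecked by the term $a^{1/q-1}\nabla a\cdot u$, which is singular on $\{a=0\}$ under only Lipschitz regularity of $a$. The level-set / distribution-function proof of the Musielak--Sobolev embedding is what circumvents this, and the condition $q/p<1+1/N$ from \eqref{H1} is exactly the hypothesis that makes the Sobolev conjugate well-defined and the weighted $q^{*}$-term dominant on the high-energy level sets.
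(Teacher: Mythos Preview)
The paper does not prove this proposition; it simply quotes it from Ho--Winkert \cite[Proposition~3.7]{Ho-Winkert-2023}. So there is no in-paper argument to compare your attempt against, and the sketch has to stand on its own.

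Your architecture is the right one: reduce by H\"older to the endpoint $(k,m)=(p^*,q^*)$, control $\int_\Omega|u|^{p^*}\,\mathrm{d}x$ through the chain $W^{1,\mathcal{H}}\hookrightarrow W^{1,p}\hookrightarrow L^{p^*}$, and treat the weighted piece $\int_\Omega a^{q^*/q}|u|^{q^*}\,\mathrm{d}x$ via the abstract Musielak--Orlicz Sobolev embedding into $L^{\mathcal{H}_*}$; the Vitali/uniform-integrability argument for compactness is clean and correct. But the single step that carries all the content---the assertion that the Sobolev conjugate $\mathcal{H}_*$ is pointwise comparable to $t^{p^*}+a(x)^{q^*/q}t^{q^*}$---is stated, not proved. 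That comparison requires estimating the inverse of $t\mapsto t^p+a(x)t^q$ and integrating against $\tau^{-1-1/N}$ \emph{uniformly in $x$}, and the uniformity as $a(x)\to 0$ is precisely the delicate point. A small inaccuracy in your diagnosis: finiteness of $\mathcal{H}_*$ only needs $p,q<N$; what the gap condition $q/p<1+1/N$ together with $a\in C^{0,1}(\overline\Omega)$ actually buys is the (A1)-type local continuity hypothesis in the framework of Harjulehto--H\"ast\"o \cite{Harjulehto-Hasto-2019}, without which the abstract embedding $W^{1,\mathcal{H}}\hookrightarrow L^{\mathcal{H}_*}$ can fail altogether. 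Your last paragraph correctly flags the weighted $q^*$-bound as the crux and correctly rejects the naive substitution $v=a^{1/q}u$, but what you offer in its place is a pointer to the right machinery rather than an argument.
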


Let us define the operator $L\colon W^{1,\mathcal H}_0(\Omega)\to \left(W^{1,\mathcal H}_0(\Omega)\right)^*$ by
\begin{align}\label{operator-def}
	\langle L(u),v\rangle:=\int_\Omega\left(|\nabla u|^{p-2}+a(x)|\nabla u|^{q-2}\right)\nabla u\cdot\nabla v\,\mathrm{d}x
\end{align}
for any $u$, $v\in W^{1,\mathcal H}_0(\Omega)$, where $\left(W^{1,\mathcal H}_0(\Omega)\right)^*$ denotes the dual space of $W^{1,\mathcal H}_0(\Omega)$ and $\langle \,\cdot \,,\cdot\,\rangle$ is the related duality pairing. The following result is taken from Liu--Dai \cite[Proposition 3.1 (ii)]{Liu-Dai-2018}.

\begin{proposition}\label{P2.4}
	Let hypothesis \eqref{H1} be satisfied. Then, the mapping $L\colon W^{1,\mathcal H}_0(\Omega)$ $\to \left(W^{1,\mathcal H}_0(\Omega)\right)^*$ given in \eqref{operator-def} is of type $(\operatorname{S}_+)$, that is, if $u_n\rightharpoonup u$ in $W^{1,\mathcal H}_0(\Omega)$ and $\limsup\limits_{n\to\infty}\langle L(u_n)-L(u),u_n-u\rangle\leq0$, then $u_n\to u$ in $W^{1,\mathcal H}_0(\Omega)$.
\end{proposition}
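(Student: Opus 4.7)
The plan is to prove the $(S_+)$ property by decomposing $L$ into its two monotone pieces, showing each piece contributes a nonnegative term whose integral vanishes, and then upgrading this to convergence of the modular $\rho_{\mathcal{H}}(\nabla(u_n-u))$, which is equivalent to norm convergence by Proposition \ref{P.m-n}(v).

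First I would write, for each $n$,
\begin{align*}
	\langle L(u_n)-L(u), u_n-u\rangle &= \int_\Omega \bigl(|\nabla u_n|^{p-2}\nabla u_n - |\nabla u|^{p-2}\nabla u\bigr)\cdot\nabla(u_n-u)\,\mathrm{d}x \\
	&\quad + \int_\Omega a(x)\bigl(|\nabla u_n|^{q-2}\nabla u_n - |\nabla u|^{q-2}\nabla u\bigr)\cdot\nabla(u_n-u)\,\mathrm{d}x \\
	&=: I_n^{(p)} + I_n^{(q)}.
\end{align*}
By the standard monotonicity of $\xi\mapsto|\xi|^{r-2}\xi$ for $r>1$, the integrands of both $I_n^{(p)}$ and $I_n^{(q)}$ are pointwise nonnegative (and $a(x)\ge 0$ by \eqref{H1}). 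Since $\limsup\langle L(u_n)-L(u),u_n-u\rangle\le 0$, both $I_n^{(p)}\to 0$ and $I_n^{(q)}\to 0$.

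Next I would use the classical Simon-type inequalities for the map $\xi\mapsto|\xi|^{r-2}\xi$, separated by two regimes. For $r\ge 2$ one has the pointwise bound
\begin{equation*}
	\bigl(|\xi|^{r-2}\xi-|\eta|^{r-2}\eta\bigr)\cdot(\xi-\eta)\ge c_r|\xi-\eta|^r,
\end{equation*}
so in the case $p,q\ge 2$ I immediately get $\|\nabla(u_n-u)\|_p^p\le C I_n^{(p)}\to 0$ and, similarly weighted, $\int_\Omega a(x)|\nabla(u_n-u)|^q\,\mathrm{d}x\le C I_n^{(q)}\to 0$. For $1<r<2$ I would instead use
\begin{equation*}
	|\xi-\eta|^r \le C_r\Bigl[\bigl(|\xi|^{r-2}\xi-|\eta|^{r-2}\eta\bigr)\cdot(\xi-\eta)\Bigr]^{r/2}\bigl(|\xi|^r+|\eta|^r\bigr)^{(2-r)/2},
\end{equation*}
and combine it with Hölder's inequality with exponents $2/r$ and $2/(2-r)$. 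The factor $(|\nabla u_n|^r+|\nabla u|^r)^{(2-r)/2}$ stays bounded in $L^{2/(2-r)}$ since $\{u_n\}$ is bounded in $W_0^{1,\mathcal H}(\Omega)$ and hence, by Proposition \ref{prop-embeddings}(i) and the modular control of the weighted part, in both $W_0^{1,p}(\Omega)$ and in the weighted $L^q$-space with weight $a(x)$. This yields $\int_\Omega|\nabla(u_n-u)|^p\,\mathrm{d}x\to 0$ and $\int_\Omega a(x)|\nabla(u_n-u)|^q\,\mathrm{d}x\to 0$ in the subcritical exponent regime as well.

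Combining the two estimates gives $\rho_{\mathcal H}(\nabla(u_n-u))=\int_\Omega|\nabla(u_n-u)|^p\,\mathrm{d}x+\int_\Omega a(x)|\nabla(u_n-u)|^q\,\mathrm{d}x\to 0$, and Proposition \ref{P.m-n}(v) (applied to $\Phi=\mathcal H$ with $\alpha=p$, $\beta=q$, $c\equiv 1$, $d=a$) then gives $\|\nabla(u_n-u)\|_{\mathcal H}\to 0$, i.e.\ $u_n\to u$ in $W_0^{1,\mathcal H}(\Omega)$. The main obstacle I expect is the weighted $q$-part when $1<q<2$: one must argue that the weight $a(x)$ can be distributed suitably across the Hölder split so that the resulting bounding factor $\int_\Omega a(x)(|\nabla u_n|^q+|\nabla u|^q)\,\mathrm{d}x$ remains bounded uniformly in $n$; this is handled by writing $a(x)=a(x)^{r/2}\cdot a(x)^{(2-r)/2}$ (with $r=q$) before invoking Hölder, so that the monotonicity integrand $I_n^{(q)}$ still dominates the desired modular piece.
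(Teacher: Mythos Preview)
Your argument is correct and is essentially the standard route to the $(\mathrm{S}_+)$ property for operators built from $\xi\mapsto|\xi|^{r-2}\xi$: split into the $p$- and $q$-pieces, use monotonicity to force each nonnegative piece to vanish, then upgrade via the Simon inequalities (with the H\"older split and the weight decomposition $a=a^{q/2}a^{(2-q)/2}$ in the singular case $1<q<2$) to modular convergence, and finally invoke Proposition~\ref{P.m-n}(v). The only cosmetic point is that you should state explicitly that weak convergence $u_n\rightharpoonup u$ implies boundedness of $\{u_n\}$ in $W_0^{1,\mathcal H}(\Omega)$, hence of both $\|\nabla u_n\|_p^p$ and $\|\nabla u_n\|_{a,q}^q$ by Proposition~\ref{P.m-n}, before using these bounds in the H\"older step; you mention this but could make the logical order cleaner.

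Note, however, that the paper does \emph{not} supply its own proof of this proposition: it simply quotes the result from Liu--Dai \cite[Proposition~3.1~(ii)]{Liu-Dai-2018}. So there is nothing to compare against in the paper itself. Your write-up is a self-contained proof of what the paper takes as a black box, and it follows the same strategy one finds in the cited reference (and more generally in the literature on $p$-Laplace-type operators).
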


The next compactness results is needed for the sandwich perturbation. It can be proved in a similar way as Lemma 4.1 by Ho--Kim--Sim \cite{Ho-Kim-Sim-2019} via Vitali's convergence theorem.

\begin{proposition}\label{prop.CE2}
	Let hypothesis \eqref{H3} be satisfied and let $\{u_n\}_{n\in\N}\subseteq W_0^{1,\mathcal{H}}(\Omega)$ be a sequence with $u_n \rightharpoonup  u$. Then, it holds
	\begin{align*}
		\int_\Omega w(x)|u_n|^s\,\mathrm{d}x\to \int_\Omega w(x)|u|^s\,\mathrm{d} x
		\quad\text{and}\quad
		\int_\Omega |w(x)||u_n-u|^s\,\mathrm{d} x\to 0
	\end{align*}
	as $n\to\infty$.
\end{proposition}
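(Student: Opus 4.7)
The plan is to apply Vitali's convergence theorem separately to the sequences $\{w|u_n|^s\}$ and $\{|w||u_n-u|^s\}$ viewed as elements of $L^1(\Omega)$. Since $\Omega$ is bounded, the tightness condition of Vitali is automatic, so only a.e.\,convergence and equi-integrability have to be verified. For the a.e.\,part, weak convergence $u_n\rightharpoonup u$ in $W_0^{1,\mathcal{H}}(\Omega)$ combined with the compact embedding $W_0^{1,\mathcal H}(\Omega)\hookrightarrow L^r(\Omega)$ of Proposition \ref{prop-embeddings}(iii) (with $1\le r<p^*$) yields a subsequence with $u_n\to u$ a.e.\,in $\Omega$, hence $w|u_n|^s\to w|u|^s$ a.e.\,and $|w||u_n-u|^s\to 0$ a.e. A standard subsequence-of-subsequence argument will then extend the final conclusion to the full sequence.

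The main work is the equi-integrability estimate, which I would carry out by splitting $\Omega$ along $\{b=0\}$ and $\{b>0\}$, matching the two integrability assumptions in \eqref{H3}. For a measurable $E\subset\Omega$, on $E\cap\{b=0\}$ I write $|w||u_n|^s = \bigl(|w|\chi_{\{b=0\}}b_0^{-s/p^*}\bigr)\bigl(b_0^{s/p^*}|u_n|^s\bigr)$ and apply H\"older's inequality with conjugate exponents $\tfrac{p^*}{p^*-s}$ and $\tfrac{p^*}{s}$ to obtain
\begin{align*}
\int_{E\cap\{b=0\}}|w||u_n|^s\,\mathrm{d}x
\le \Bigl(\int_{E}|w\chi_{\{b=0\}}b_0^{-\frac{s}{p^*}}|^{\frac{p^*}{p^*-s}}\mathrm{d}x\Bigr)^{\frac{p^*-s}{p^*}}
\Bigl(\int_\Omega b_0|u_n|^{p^*}\mathrm{d}x\Bigr)^{\frac{s}{p^*}}.
\end{align*}
The last factor is uniformly bounded because $b_0\in L^\infty(\Omega)$ and $\{u_n\}$ is bounded in $L^{p^*}(\Omega)$ by Proposition \ref{prop-embeddings}(ii), while the first factor tends to zero as $|E|\to 0$ by absolute continuity of the integral, thanks to the integrability assumption $w\chi_{\{b=0\}}b_0^{-s/p^*}\in L^{p^*/(p^*-s)}(\Omega)$ from \eqref{H3}. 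On $E\cap\{b>0\}$ I perform the analogous splitting $|w||u_n|^s = \bigl(|w|\chi_{\{b>0\}}b^{-s/q^*}\bigr)\bigl(b^{s/q^*}|u_n|^s\bigr)$ with conjugate exponents $\tfrac{q^*}{q^*-s}$ and $\tfrac{q^*}{s}$, and control $\int_\Omega b|u_n|^{q^*}\mathrm{d}x$ uniformly in $n$ by first extending \eqref{C.I} from $C_c^\infty(\Omega)$ to $W_0^{1,\mathcal H}(\Omega)$ by density and then using the boundedness of $\{u_n\}$ in $W_0^{1,\mathcal H}(\Omega)$. Combining the two pieces yields equi-integrability of $\{w|u_n|^s\}$, so Vitali delivers the first convergence.

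For the second convergence, the same H\"older splittings applied to $|u_n-u|$ in place of $|u_n|$ give equi-integrability of $\{|w||u_n-u|^s\}$: indeed $\{u_n-u\}$ is also bounded in $W_0^{1,\mathcal H}(\Omega)$, hence in $L^{p^*}(\Omega)$ and in the weighted space where $\|b^{1/q^*}(\cdot)\|_{q^*}$ is finite via \eqref{C.I}. Since $|w||u_n-u|^s\to 0$ a.e., Vitali again applies and gives the claim. I expect the only delicate point to be the density argument needed to transfer \eqref{C.I} from $C_c^\infty(\Omega)$ to arbitrary $u\in W_0^{1,\mathcal H}(\Omega)$ (so that the second factor in the H\"older bound on $\{b>0\}$ makes sense and is uniformly controlled by the norm in $W_0^{1,\mathcal H}(\Omega)$); this follows from the definition of $W_0^{1,\mathcal H}(\Omega)$ as the closure of $C_c^\infty(\Omega)$ together with Fatou's lemma.
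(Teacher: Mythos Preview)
Your proposal is correct and follows exactly the approach indicated in the paper, which does not give a full proof but merely states that the result ``can be proved in a similar way as Lemma 4.1 by Ho--Kim--Sim \cite{Ho-Kim-Sim-2019} via Vitali's convergence theorem.'' Your splitting of $\Omega$ into $\{b=0\}$ and $\{b>0\}$ to exploit the two integrability conditions in \eqref{H3}, together with the uniform bounds from \eqref{Ce} (which already encodes the density extension of \eqref{C.I} you mention), supplies precisely the equi-integrability needed for Vitali, and the subsequence-of-subsequence reduction is standard.
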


Next, we want to recall further notations which will be used in the sequel. First, we mainly work with terms set as
\begin{align*}
	\|u\|_{d,m}&:=\left(\int_{\Omega}d(x)| u| ^{m}\,\mathrm{d} x\right)^{1/m},\quad&
	\|u\|_{m}&:=\left(\int_{\Omega}| u| ^{m}\,\mathrm{d} x\right)^{1/m},\\
	\|u\|_{L^m(d,E)}&:=\left(\int_{E}d(x)| u| ^{m}\,\mathrm{d} x\right)^{1/m},\quad
	&\|u\|_{L^m(E)}&:=\left(\int_{E}| u| ^{m}\,\mathrm{d} x\right)^{1/m},
\end{align*}
while $|E|$ indicates the Lebesgue measure of a measurable set $E\subset\R^N$. Also, for any $r>0$, we denote the sets
\begin{align*}
	B(y,r)&:=\{x\in\R^N\colon\,\,  |x-y|< r\},\\
	B_r&:=\{u\in W_0^{1,\mathcal{H}}(\Omega)\colon\,\, \|u\|<r\},\\
	\partial B_r&:=\{u\in W_0^{1,\mathcal{H}}(\Omega)\colon\,\, \|u\|=r\}.
\end{align*}
Meanwhile, in the next section, we denote by $\mathcal{M}(\overline{\Omega})$ the space of the Radon measures on $\overline{\Omega}$.

Considering the notation above, note that under assumption \eqref{H2} we have
\begin{align*}
	S_p:=\underset{\phi\in C_c^\infty(\Omega)\setminus\{0\}}{\inf}
	\frac{\|\nabla\phi\|_p}{\| \phi \|_{b_0,p^*}}>0
	\quad\text{and}\quad
	S_q:=\underset{\phi\in C_c^\infty(\Omega)\setminus\{0\}}{\inf}
	\frac{\|\nabla\phi\|_{a,q}}{\| \phi \|_{b,q^*}}>0\quad \text{if}\ b\not\equiv 0.
\end{align*}
Thus, there exists a constant $C_e>1$ such that
\begin{align}
	\label{Ce}
	\|u\|_{\mathcal{B}}\leq C_e\|u\|,\quad \|u\|_{b_0,p^*}\leq C_e\|\nabla u\|_p\quad\text{and}\quad \|u\|_{b,q^*}\leq C_e\|\nabla u\|_{a,q}
\end{align}
for any $u\in W_0^{1,\mathcal{H}}(\Omega)$.

In order to determine (nonnegative) solutions of problem \eqref{Eq.Sandwich}, we introduce the energy functionals $J$, $J_+\colon W_0^{1,\mathcal{H}}(\Omega)\to\mathbb{R}$ given by
\begin{align*}
	J(u)   & :=\int_\Omega\mathcal A\left(x,\nabla u\right)\,\mathrm{d} x-\frac{\lambda}{s}\int_\Omega w(x)|u|^{s}\,\mathrm{d} x-\theta\int_\Omega\widehat{B}\left(x,u\right)\,\mathrm{d} x,\\
	J_+(u) & :=\int_\Omega\mathcal A\left(x,\nabla u\right)\,\mathrm{d} x-\frac{\lambda}{s}\int_\Omega w(x)u_+^{s}\,\mathrm{d} x-\theta\int_\Omega\widehat{B}\left(x,u_+\right)\,\mathrm{d} x,
\end{align*}
where $u_+:=\max\{u,0\}$ and
\begin{equation*}
	\begin{aligned}
	\mathcal{A}(x,\xi) & :=\frac{1}{p}|\xi|^{p}+\frac{a(x)}{q}|\xi|^{q}&\text{for }  &(x,\xi)\in \overline{\Omega}\times\R^N, \\
	\widehat{B}(x,t) & :=\frac{1}{p^*}b_0(x)|t|^{p^*}+\frac{1}{q^*}b(x)|t|^{q^*} & \text{for } &(x,t)\in \overline{\Omega}\times \R.
\end{aligned}
\end{equation*}
It is clear that $J$, $J_+$ are of class $C^1(W_0^{1,\mathcal{H}}(\Omega),\mathbb{R})$ and a critical point of $J$ (resp.\,$J_+$) is a solution (resp.\,nonnegative solution) to problem~\eqref{Eq.Sandwich}.

%*******************************************************************************
\section{A compactness result}\label{Section_3}
%*******************************************************************************

In this section we prove an important lemma which provides a compactness result regarding the functionals $J$ and $J_+$. For this, we recall that a sequence $\{u_n\}_{n\in\N}\subset W_0^{1,\mathcal{H}}(\Omega)$ is a Palais-Smale sequence for a functional $I\in C^1(W_0^{1,\mathcal{H}}(\Omega),\R)$ at level $c\in\R$ ($\textup{(PS)}_c$ sequence for short), if
\begin{align}\label{ps}
	I(u_n)\to c
	\quad\text{and}\quad
	I'(u_n)\to 0\quad\text{ in } \left(W_0^{1,\mathcal{H}}(\Omega)\right)^* \text{ as } n\to\infty.
\end{align}
We say that $I$ satisfies the Palais-Smale condition at level $c\in\R$ ($\textup{(PS)}_c$ condition for short), if any $\textup{(PS)}_c$ sequence admits a convergent subsequence in $W_0^{1,\mathcal{H}}(\Omega)$.

With a new concentration-compactness principle inspired by the work of Ha--Ho \cite[Theorem 2.1]{Ha-Ho-2024}, we study $\textup{(PS)}_c$ sequences under an important threshold defined below in \eqref{PS_c2}. Such a threshold can be derived by applying a concentration–compactness principle in the space $W_0^{1,\mathcal{H}}(\Omega)$. However, applying \cite[Theorem 2.1]{Ha-Ho-2024} would yield a threshold involving powers of $\theta$ and $\lambda$ that depend on both $p$ and $q$, which would complicate later estimates due to the presence of the intermediate exponent $s\in(p,q)$. For this reason, we establish a new concentration–compactness principle to obtain the desired threshold in  \eqref{PS_c2}.

\begin{lemma}\label{Le.PS}
	Let hypotheses \eqref{H1}--\eqref{H3} be satisfied and  let $\lambda$, $\theta>0$. Then, any bounded $\textup{(PS)}_c$ sequence of the functionals $J$ and $J_+$ admits a convergent subsequence in $W_0^{1,\mathcal{H}}(\Omega)$, provided that
	\begin{align}\label{PS_c2}
		c<C_1\min\left\{\theta^{-\frac{p}{p^*-p}},\theta^{-\frac{q}{q^*-q}}\right\}-C_2\lambda^{\frac{q}{q-s}}-C_3\lambda^{\frac{q^\ast}{q^\ast-s}}\theta^{-\frac{s}{q^\ast-s}},
	\end{align}
	where $C_1:=C_1(N,p,q)$, $C_2:=C_2(N,p,q,s,w)$ and $C_3:=C_3(N,p,q,w,b)$ are three suitable positive constants.
	If $b\equiv 0$, then any bounded $\textup{(PS)}_c$ sequence of the functionals $J$ and $J_+$ admits a convergent subsequence in $W_0^{1,\mathcal{H}}(\Omega)$, provided that
	\begin{align}\label{PS_c2'}
		c<C_1\theta^{-\frac{p}{p^*-p}}-C_2\lambda^{\frac{q}{q-s}}.
	\end{align}
\end{lemma}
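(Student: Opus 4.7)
The plan is to run a concentration--compactness argument tailored to the Musielak--Orlicz setting and then conclude strong convergence via the $(\operatorname{S}_+)$-property from Proposition \ref{P2.4}. Let $\{u_n\}_{n\in\N}$ be a bounded $\textup{(PS)}_c$ sequence for $J$; the case of $J_+$ is identical. By reflexivity, up to a subsequence $u_n\rightharpoonup u$ in $W_0^{1,\mathcal H}(\Omega)$, and by Proposition \ref{prop_CE1} we have $u_n\to u$ almost everywhere and in $L^r(\Omega)$ for every $r<p^*$. A concentration--compactness principle in the spirit of Ha--Ho \cite{Ha-Ho-2024}, applied to the four densities $|\nabla u_n|^p$, $a(x)|\nabla u_n|^q$, $b_0(x)|u_n|^{p^*}$, $b(x)|u_n|^{q^*}$, produces an at most countable set $I$, points $\{x_i\}_{i\in I}\subset\overline{\Omega}$ and nonnegative atomic weights $\mu_{0,i},\mu_{1,i},\nu_{0,i},\nu_{1,i}$ obeying the reverse Sobolev-type bounds $S_p\nu_{0,i}^{1/p^*}\leq\mu_{0,i}^{1/p}$ and, when $b\not\equiv 0$, $S_q\nu_{1,i}^{1/q^*}\leq\mu_{1,i}^{1/q}$.

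Next, for each $i\in I$, I would pick a cutoff $\phi_{\varepsilon,i}\in C_c^\infty(\R^N)$ with $\phi_{\varepsilon,i}\equiv 1$ on $B(x_i,\varepsilon)$, supported in $B(x_i,2\varepsilon)$, and $|\nabla\phi_{\varepsilon,i}|\leq 2/\varepsilon$. Testing $\langle J'(u_n),\phi_{\varepsilon,i}u_n\rangle\to 0$ and letting $n\to\infty$ then $\varepsilon\to 0$, the cross-terms $\int u_n|\nabla u_n|^{r-2}\nabla u_n\cdot\nabla\phi_{\varepsilon,i}\,\mathrm{d}x$ for $r\in\{p,q\}$ vanish by H\"older's inequality in $L^{\mathcal H}(\Omega)$ combined with the local compactness from Proposition \ref{prop_CE1}, and the sandwich piece vanishes by Proposition \ref{prop.CE2}. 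This yields the atomic inequality $\mu_{0,i}+\mu_{1,i}\leq\theta(\nu_{0,i}+\nu_{1,i})$, which together with the reverse Sobolev bounds forces each atom either to vanish entirely or to satisfy
\begin{align*}
	\mu_{0,i}\geq c_0\theta^{-\frac{p}{p^*-p}}
	\quad\text{or}\quad
	\mu_{1,i}\geq c_0\theta^{-\frac{q}{q^*-q}},
\end{align*}
with $c_0=c_0(p,q,S_p,S_q)>0$.

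The main obstacle is now to contradict \eqref{PS_c2} whenever $I\neq\emptyset$. I would use the identity
\begin{align*}
	c+o(1)=J(u_n)-\frac{1}{p^*}\langle J'(u_n),u_n\rangle,
\end{align*}
in which all gradient and critical densities carry nonnegative coefficients (since $p<q<p^*<q^*$), and the only negative contribution is $-\lambda(\tfrac{1}{s}-\tfrac{1}{p^*})\int_\Omega w(x)|u_n|^s\,\mathrm{d}x$. To control this sandwich term, I would split it into two pieces. Globally on $\Omega$, \eqref{Cond-H3} gives $\int w|u_n|^s\leq C_w\|\nabla u_n\|_{a,q}^s$, and Young's inequality with conjugate exponents $q/s$ and $q/(q-s)$ absorbs part of this into the coefficient of $\|\nabla u_n\|_{a,q}^q$, leaving the residual $-C_2\lambda^{q/(q-s)}$. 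On $\{b>0\}$, H\"older's inequality with the weight $wb^{-s/q^*}\in L^{q^*/(q^*-s)}(\Omega)$ from \eqref{H3} gives $\int_{\{b>0\}}w|u_n|^s\leq C(\int b|u_n|^{q^*})^{s/q^*}$, and Young's inequality against the atomic part of $\theta\int b|u_n|^{q^*}$ with exponents $q^*/s$ and $q^*/(q^*-s)$ produces the residual $-C_3\lambda^{q^*/(q^*-s)}\theta^{-s/(q^*-s)}$. Keeping only the contribution of a single atom at $x_i$ (via whichever of the two lower bounds on $\mu_{0,i}$ or $\mu_{1,i}$ is actually achieved) then yields
\begin{align*}
	c\geq C_1\min\Bigl\{\theta^{-\frac{p}{p^*-p}},\theta^{-\frac{q}{q^*-q}}\Bigr\}-C_2\lambda^{\frac{q}{q-s}}-C_3\lambda^{\frac{q^*}{q^*-s}}\theta^{-\frac{s}{q^*-s}},
\end{align*}
contradicting \eqref{PS_c2}, so $I=\emptyset$.

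With no atoms present, the decomposition of the weak-$*$ limits $\nu_0,\nu_1$ gives $\int b_0|u_n|^{p^*}\to\int b_0|u|^{p^*}$ and $\int b|u_n|^{q^*}\to\int b|u|^{q^*}$; combined with Proposition \ref{prop.CE2} for the $w$-term, testing $J'(u_n)-J'(u)$ against $u_n-u$ reduces to $\langle L(u_n)-L(u),u_n-u\rangle\to 0$, and the $(\operatorname{S}_+)$-property of $L$ from Proposition \ref{P2.4} delivers $u_n\to u$ strongly in $W_0^{1,\mathcal H}(\Omega)$. When $b\equiv 0$, the $q^*$-atomic branch and the $C_3$-residual disappear, yielding the sharper threshold \eqref{PS_c2'}.
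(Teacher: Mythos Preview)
Your proposal is correct and follows the same concentration--compactness strategy as the paper's proof. The only differences are tactical: the paper subtracts $\tfrac{1}{\widetilde q}\langle J'(u_n),u_n\rangle$ with $\widetilde q=\tfrac{q+p^*}{2}\in(q,p^*)$ and reads the quantized lower bound off the \emph{critical} atoms $\theta(\nu_i+\overline\nu_i)$, whereas you take $\tfrac{1}{p^*}$ (which kills the $b_0|u_n|^{p^*}$ coefficient) and read it from the \emph{gradient} atoms $\mu_{0,i},\mu_{1,i}$---this works because $q<p^*$ keeps the coefficient of $\|\nabla u_n\|_{a,q}^q$ strictly positive; and the paper's ``two pieces'' for the sandwich term come from the single estimate $\int_\Omega w|u|^s\le 2\int_{\{b>0\}}|w||u|^s+C_w\|\nabla u\|_{a,q}^s$ (a consequence of \eqref{Cond-H3}), with the first piece bounded by $\|u\|_{b,q^*}^s$ via H\"older and then balanced against $\theta\|u\|_{b,q^*}^{q^*}$ to produce $C_3$, and the second absorbed into $\|\nabla u\|_{a,q}^q$ via Young to produce $C_2$---your description of this split is slightly garbled (the ``global'' bound and the $\{b>0\}$ bound cannot both be applied to the full integral), but the underlying mechanism is the same, and with your gradient-atom extraction the $C_2$ piece alone already suffices.
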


\begin{proof}
	We only prove the assertion for $J$ since the situation for $J_+$ can be proved similarly. Also, we only consider the case  $b\not\equiv 0$, as the case $b\equiv 0$ is similar and easier to show.

	Fix $\lambda > 0$, $\theta>0$ and let $c\in\mathbb{R}$ satisfy \eqref{PS_c2} with $C_1$, $C_2$ and $C_3$ to be specified later. Let $\{u_n\}_{n\in\mathbb{N}}\subseteq W_0^{1,\mathcal{H}}(\Omega)$ be a bounded $\textup{(PS)}_c$ sequence of the functional $J$. Taking Proposition \ref{prop_CE1} into account, there exists $u\in W_0^{1,\mathcal{H}}(\Omega)$ such that up to a subsequence, if necessary not relabeled, we have
	\begin{equation}\label{1}
		\begin{aligned}
			u_n&\rightharpoonup u\quad\text{in }W_0^{1,\mathcal{H}}(\Omega),\quad u_n \to u\quad\text{in }L^{\mathcal H}(\Omega),\\
			u_n(x) &\to u(x)  \quad\text{for a.a.\,} x\in\Omega,
		\end{aligned}
	\end{equation}
	as $n\to\infty$.
	Furthermore, by virtue of Fonseca–Leoni \cite[Proposition 1.202]{Fonseca-Leoni-2007}, we can find bounded Radon measures $\mu$, $\nu$, $\overline{\mu}$, $\overline{\nu}\in \mathcal{M}(\overline{\Omega})$ such that
	\begin{equation}\label{12}
		\begin{aligned}
			|\nabla u_n|^{p}& \overset{\ast }{\rightharpoonup }\mu\quad\text{in } \mathcal{M}(\overline{\Omega}), \quad
			& b_0(x)|u_n|^{p^\ast}                                                                                   & \overset{\ast }{\rightharpoonup }\nu \quad\text{in } \mathcal{M}(\overline{\Omega}), \\
			a(x)|\nabla u_n|^{q} & \overset{\ast }{\rightharpoonup }\overline{\mu}\quad\text{in } \mathcal{M}(\overline{\Omega}),
			& b(x)|u_n|^{q^\ast} & \overset{\ast }{\rightharpoonup }\overline{\nu} \quad\text{in }\mathcal{M}(\overline{\Omega}),
		\end{aligned}
	\end{equation}
	as $n\to\infty$. Since $W_0^{1,\mathcal{H}}(\Omega)\hookrightarrow W_0^{1,p}(\Omega)$ by Proposition \ref{prop-embeddings} (i), we can apply the concentration-compactness principle by Lions \cite[Lemma I.1]{Lions-1985}, considering $b_0(\cdot)\in L^\infty(\Omega)$ by \eqref{H2}. Besides, thanks to \eqref{H1} and \eqref{H2} we can argue as in Ha--Ho \cite[Theorem 2.1]{Ha-Ho-2024} so that, taking also Lions \cite[Lemma I.1]{Lions-1985} into account, there exist five families of distinct points $\{x_i\}_{i\in\mathcal{I}}\subset\overline{\Omega}$, and of nonnegative numbers $\{\nu_i\}_{i\in\mathcal{I}}$, $\{\mu_i\}_{i\in\mathcal{I}}$, $\{\overline{\nu}_i\}_{i\in\mathcal{I}}$, $\{\overline{\mu}_i\}_{i\in\mathcal{I}}$, with $\mathcal{I}$ being an at most countable index set, such that we have
	\begin{align}\label{PL.S.m-n1}
		\mu \geq |\nabla u|^{p} + \sum_{i\in \mathcal{I}} \mu_i \delta_{x_i},\quad
		\nu=b_0(x)|u|^{p^\ast} + \sum_{i\in \mathcal{I}}\nu_i\delta_{x_i},\quad
		S_p \nu_i^{1/p^\ast} \leq \mu_i^{1/p},
	\end{align}
	for any $i\in \mathcal{I}$ and
	\begin{align}\label{PL.S.m-n2}
		\overline{\mu} \geq a(x)|\nabla u|^{q} + \sum_{i\in \mathcal{I}} \overline{\mu}_i \delta_{x_i},\quad
		\overline{\nu}=b(x)|u|^{q^\ast} + \sum_{i\in \mathcal{I}}\overline{\nu}_i\delta_{x_i} ,\quad
		S_q \overline{\nu}_i^{1/q^\ast} \leq \overline{\mu}_i^{1/q},
	\end{align}
	for any $i\in \mathcal{I}$, where $\delta_{x_i}$ is the Dirac mass at $x_i$. Then, we proceed by steps.

\vskip5pt
	\noindent{\bf Step 1:} $\mu_i+\overline{\mu}_i\leq \theta\left(\nu_i+\overline{\nu}_i\right)$ for any $i\in\mathcal I$.
\vskip5pt

	To this end, fix $i\in\mathcal I$ and let $\phi$ be in $C_c^\infty(\mathbb{R}^N)$ such that
	\begin{align*}
		0\leq \phi\leq 1,\quad \phi\equiv 1\quad \text{on }B\left(0,\frac{1}{2}\right)\quad \text{and}\quad \phi\equiv 0\quad \text{outside of } B(0,1).
	\end{align*}
	For $\varepsilon>0$, set $\phi_{i,\varepsilon}(x):= \phi(\frac{x-x_i}{\varepsilon})$ for $x\in\mathbb{R}^N$. Fix such an $\varepsilon$ and note that
	\begin{equation}\label{HH}
		\begin{aligned}
			&\int_{\Omega}\phi_{i,\varepsilon}|\nabla u_n|^p\,\mathrm{d} x+\int_{\Omega}\phi_{i,\varepsilon}a(x)|\nabla u_n|^q\,\mathrm{d} x\\
			& =\theta\int_\Omega\phi_{i,\varepsilon}b_0(x) |u_n|^{p^*}\,\mathrm{d} x+\theta\int_\Omega\phi_{i,\varepsilon}b(x)|u_n|^{q^*}\,\mathrm{d} x \\
			& \quad+\lambda\int_{\Omega}\phi_{i,\varepsilon}w(x)|u_n|^s\,\mathrm{d} x -\int_\Omega A(x,\nabla u_n)\cdot \nabla \phi_{i,\varepsilon}u_n \,\mathrm{d} x  +\langle J'(u_n) ,\phi_{i,\varepsilon}u_n \rangle.
		\end{aligned}
	\end{equation}
	Let $\delta>0$ be arbitrary. By Young's inequality we have
	\begin{align*}
		\begin{aligned}
			&\int_\Omega \left|A(x,\nabla u_n)\cdot \nabla \phi_{i,\varepsilon}u_n\right| \,\mathrm{d} x\\
			& \leq\delta\left(\|\nabla u_n\|_p^p+\|\nabla u_n\|_{a,q}^q\right)+
			C_\delta\left(\int_{\Omega}|\nabla \phi_{i,\varepsilon} u_n|^p\,\mathrm{d} x+\int_{\Omega}a(x)|\nabla \phi_{i,\varepsilon} u_n|^q\,\mathrm{d} x\right) \\
			& \leq\delta C_*+
			C_\delta\left(\int_{\Omega}|\nabla \phi_{i,\varepsilon} u_n|^p\,\mathrm{d} x+\int_{\Omega}a(x)|\nabla \phi_{i,\varepsilon} u_n|^q\,\mathrm{d} x\right),
		\end{aligned}
	\end{align*}
	with $C_*>0$ given by the boundedness of $\{u_n\}_{n\in\mathbb{N}}$  and $C_\delta=\delta^{1-p}+\delta^{1-q}$. Thus, \eqref{HH} rewrites as
	\begin{align*}
		&\int_{\Omega}\phi_{i,\varepsilon}|\nabla u_n|^p\,\mathrm{d} x+\int_{\Omega}\phi_{i,\varepsilon}a(x)|\nabla u_n|^q\,\mathrm{d} x\\
		& \leq\theta\int_\Omega\phi_{i,\varepsilon}b_0(x)|u_n|^{p^*}\,\mathrm{d} x+\theta\int_\Omega\phi_{i,\varepsilon}b(x)|u_n|^{q^*}\,\mathrm{d} x \\
		& \quad+\lambda\int_{\Omega}\phi_{i,\varepsilon}w(x)|u_n|^s\,\mathrm{d} x +\delta C_* \\
		& \quad+
		C_\delta\left(\int_{\Omega}|\nabla \phi_{i,\varepsilon} u_n|^p\,\mathrm{d} x+\int_{\Omega}a(x)|\nabla \phi_{i,\varepsilon} u_n|^q\,\mathrm{d} x\right)  +\langle J'(u_n) ,\phi_{i,\varepsilon}u_n \rangle.
	\end{align*}
	Hence, letting $n\to\infty$, by considering \eqref{ps} with $I=J$, \eqref{1} and \eqref{12} as well as Propositions~\ref{prop_CE1} and \ref{prop.CE2}, we get
	\begin{equation}\label{HH2}
		\begin{aligned}
			\int_{\overline{\Omega}}\phi_{i,\varepsilon}\,\mathrm{d}\mu+\int_{\overline{\Omega}}\phi_{i,\varepsilon}\,\mathrm{d}\overline{\mu}
			& \leq\theta\int_{\overline{\Omega}}\phi_{i,\varepsilon} \,\mathrm{d}\nu+\theta\int_{\overline{\Omega}}\phi_{i,\varepsilon}\,\mathrm{d}\overline{\nu} \\
			& \quad+\lambda\int_{\Omega}\phi_{i,\varepsilon} w(x)|u|^s\,\mathrm{d} x +\delta C_*  \\
			& \quad+ C_\delta\left(\int_{\Omega}|\nabla \phi_{i,\varepsilon} u|^p\,\mathrm{d} x+\int_{\Omega}a(x)|\nabla \phi_{i,\varepsilon} u|^q\,\mathrm{d} x\right).
		\end{aligned}
	\end{equation}
	Since by Proposition \ref{prop_CE1} we know that $u\in L^{\mathcal G}(\Omega)$ with
	\begin{align*}
		\mathcal G(x,t):=|t|^{p^*}+a(x)^{\frac{q^*}{q}}|t|^{q^*},
	\end{align*}
	we can apply H\"older's inequality to obtain
	\begin{equation}\label{HH3}
		\begin{aligned}
			&\int_{\Omega_{i,\varepsilon}}|u\nabla \phi_{i,\varepsilon}|^p\,\mathrm{d} x+\int_{\Omega_{i,\varepsilon}}a(x)|u\nabla \phi_{i,\varepsilon}|^q\,\mathrm{d} x\\
			& \leq
			\|u\|_{L^{p^*}(\Omega_{i,\varepsilon})}^p\|\nabla \phi_{i,\varepsilon}\|_{L^N(B(x_i,\varepsilon))}^p
			%\\
			%& \quad
			+\|u\|_{L^{q^*}(a^{q^*/q},\Omega_{i,\varepsilon})}^q\|\nabla \phi_{i,\varepsilon}\|_{L^N(B(x_i,\varepsilon))}^q,
		\end{aligned}
	\end{equation}
	where $\Omega_{i,\varepsilon}=\Omega\cap B(x_i,\varepsilon)$. By a simple change of variable we get
	\begin{equation}\label{HH4}
		\|\nabla \phi_{i,\varepsilon}\|_{L^N(B(x_i,\varepsilon))}=\|\nabla \phi\|_{L^N(B(0,1))}.
	\end{equation}
	Thus, by sending $\varepsilon\to0$ in \eqref{HH3} and considering \eqref{HH4}, it follows that
	\begin{equation}\label{HH5}
		\lim_{\varepsilon\to0}\left(\int_{\Omega}|u\nabla \phi_{i,\varepsilon}|^p\,\mathrm{d} x+\int_{\Omega}a(x)|u\nabla \phi_{i,\varepsilon}|^q\,\mathrm{d} x\right)=0.
	\end{equation}
	On the other hand, by sending $\varepsilon\to0$ in \eqref{HH2}, using \eqref{PL.S.m-n1}, \eqref{PL.S.m-n2} and \eqref{HH5}, we obtain
	\begin{align*}
		\mu_i+\overline{\mu}_i\leq \theta\left(\nu_i+\overline{\nu}_i\right)+ \delta C_*.
	\end{align*}
	Since $\delta>0$ was chosen arbitrarily, the proof of Step 1 is completed.

\vskip5pt
	\noindent{\bf Step 2:} $\nu_i=\overline{\nu}_i=0$ for any $i\in\mathcal I$.
\vskip5pt

	Let us assume by contradiction that there exists $i\in\mathcal I$ such that $\nu_i+\overline{\nu}_i>0$. From \eqref{PL.S.m-n1} and \eqref{PL.S.m-n2}, we have $\mu_i+\overline{\mu}_i>0$ and
	\begin{align}\label{PL.S.E2}
		\nu_i+\overline{\nu}_i\leq S_*\left(\mu_i^{p^*/p}+\overline{\mu}_i^{q^*/q}\right),
	\end{align}
	where $S_*:=\max\left\{S_p^{-p^*},S_p^{-q^*}\right\}$. Now, we claim that there exists $\widetilde{C}_1:=\widetilde{C}_1(N,p,q)>0$ such that
	\begin{align}\label{PL.S.E3}
		\theta\left(\nu_i+\overline{\nu}_i\right)\geq \mu_i+\overline{\mu}_i\geq \widetilde{C}_1\min\left\{\theta^{-\frac{p}{p^*-p}},\theta^{-\frac{q}{q^*-q}}\right\}.
	\end{align}
	For this, we distinguish the following cases:
	\begin{enumerate}[leftmargin=1.5cm]
		\item[\bf Case 1:]
			Let $\mu_i\geq 1$.\\
			Then, combining Step 1 and \eqref{PL.S.E2}, we get
			\begin{align*}
				\theta^{-1}\left(\mu_i+\overline{\mu}_i\right)\leq 	\nu_i+\overline{\nu}_i\leq S_*\left(\mu_i^{q^*/q}+\overline{\mu}_i^{q^*/q}\right)\leq S_*\left(\mu_i+\overline{\mu}_i\right)^{q^*/q}.
			\end{align*}
			This yields
			\begin{align*}
				\theta\left(\nu_i+\overline{\nu}_i\right)\geq \mu_i+\overline{\mu}_i\geq  S_*^{-\frac{q}{q^*-q}}\theta^{-\frac{q}{q^\ast-q}}.
			\end{align*}
		\item[\bf Case 2:]
			Let $\mu_i<1$ and $\overline{\mu}_i\geq 1$.\\
			Then, from Step 1 and \eqref{PL.S.E2} again, we obtain
			\begin{align*}
				\theta^{-1}\left(\mu_i+\overline{\mu}_i\right)\leq \nu_i+\overline{\nu}_i\leq 2S_*\overline{\mu}_i^{q^*/q}\leq 2S_*\left(\mu_i+\overline{\mu}_i\right)^{q^*/q}.
			\end{align*}
			This gives
			\begin{align*}
				\theta\left(\nu_i+\overline{\nu}_i\right)\geq \mu_i+\overline{\mu}_i\geq  \left(2S_*\right)^{-\frac{q}{q^*-q}}\theta^{-\frac{q}{q^\ast-q}}.
			\end{align*}
		\item[\bf Case 3:]
			Let $\mu_i< 1$ and $\overline{\mu}_i<1$.\\
			Again from Step 1 and \eqref{PL.S.E2}, it follows that
			\begin{align*}
				\theta^{-1}\left(\mu_i+\overline{\mu}_i\right)\leq\nu_i+\overline{\nu}_i\leq S_*\left(\mu_i^{p^*/p}+\overline{\mu}_i^{p^*/p}\right)\leq S_*\left(\mu_i+\overline{\mu}_i\right)^{p^*/p}.
			\end{align*}
			This and the fact that $\mu_i+\overline{\mu}_i>0$ lead to
			\begin{align*}
				\theta\left(\nu_i+\overline{\nu}_i\right)\geq \mu_i+\overline{\mu}_i \geq  S_*^{-\frac{p}{p^*-p}}\theta^{-\frac{p}{p^\ast-p}}.
			\end{align*}
	\end{enumerate}
	In summary, we arrive at the statement \eqref{PL.S.E3} by taking
	\begin{align*}
		\widetilde{C}_1:=\min\left\{\left(2S_*\right)^{-\frac{q}{q^*-q}},S_*^{-\frac{p}{p^*-p}}\right\}.
	\end{align*}
	On the other hand, putting $\widetilde{q}=(q+p^*)/2$, by \eqref{ps} we have
	\begin{align*}
		c&= J(u_n)-\frac{1}{\widetilde{q}}\langle J'(u_n) ,u_n\rangle+o_n(1) \\
		&\geq  \left(\frac{1}{q}-\frac{1}{\widetilde{q}}\right)\|\nabla u_n\|_{a,q}^q-\lambda \left(\frac{1}{s}-\frac{1}{\widetilde{q}}\right)\int_{\Omega}w(x)|u_n|^{s}\,\mathrm{d} x\\
		&\quad +\theta\left(\frac{1}{\widetilde{q}}-\frac{1}{p^\ast}\right) \left[\int_{\Omega}b_0(x)|u_n|^{p^\ast}\,\mathrm{d} x+\int_{\Omega}b(x)|u_n|^{q^\ast}\,\mathrm{d} x\right]+o_n(1),
	\end{align*}
	as $n\to\infty$. Passing to the limit as $n\to\infty$ in the last estimate and using \eqref{1}--\eqref{PL.S.m-n2}  %\eqref{1}, \eqref{12}, \eqref{PL.S.m-n1}, \eqref{PL.S.m-n2}
	and Proposition~\ref{prop.CE2}, we obtain
	\begin{equation}\label{3.16}
		\begin{aligned}
			c &\geq  \left(\frac{1}{q}-\frac{1}{\widetilde{q}}\right)\|\nabla u\|_{a,q}^q-\lambda \left(\frac{1}{s}-\frac{1}{\widetilde{q}}\right)\int_{\Omega}w(x)|u|^{s}\,\mathrm{d} x \\
			&\quad +\theta\left(\frac{1}{\widetilde{q}}-\frac{1}{p^\ast}\right) \left(\|u\|_{b_0,p^\ast}^{p^\ast}+\|u\|_{b,q^\ast}^{q^\ast}+\nu_i+\overline{\nu}_i\right).
		\end{aligned}
	\end{equation}
	By \eqref{H3} we have
	\begin{align*}
		\int_{\{b=0\}}w(x)|u|^{s}\,\mathrm{d} x\leq \int_{\{b>0\}}|w(x)||u|^{s}\,\mathrm{d} x+C_w\left(\int_{\Omega}a(x)|\nabla u|^q\,\mathrm{d} x\right)^{\frac{s}{q}}.
	\end{align*}
	Thus, using H\"older's inequality and Young's inequality gives
	\begin{align*}
		& \lambda \left(\frac{1}{s}-\frac{1}{\widetilde{q}}\right)	\int_{\Omega}w(x)|u|^{s}\,\mathrm{d} x  \\
		 & \leq 2\lambda \left(\frac{1}{s}-\frac{1}{\widetilde{q}}\right)\int_{\Omega}|w(x)|\chi_{\{b>0\}}|u|^{s}\,\mathrm{d} x+C_w\lambda \left(\frac{1}{s}-\frac{1}{\widetilde{q}}\right)\|\nabla u\|_{a,q}^{s}  \\
		 & \leq2\left(\frac{1}{s}-\frac{1}{\widetilde{q}}\right)\|w\chi_{\{b>0\}}b^{-\frac{s}{q^\ast}}\|_{L^{\frac{q^\ast}{q^\ast-s}}(\Omega)}\lambda\|u\|_{b,q^\ast}^{s}\\
		 &\quad +\frac{1}{2}\left(\frac{1}{q}-\frac{1}{\widetilde{q}}\right)\|\nabla u\|_{a,q}^q+C_2\lambda^{\frac{q}{q-s}},
	\end{align*}
	with a suitable $C_2:=C_2(N,p,q,s,w)>0$. From \eqref{PL.S.E3}, \eqref{3.16} and the last estimate we deduce
	\begin{align*}
		c\geq & -2\left(\frac{1}{s}-\frac{1}{\widetilde{q}}\right)\|w\chi_{\{b>0\}}b^{-\frac{s}{q^\ast}}\|_{L^{\frac{q^\ast}{q^\ast-s}}(\Omega)}\lambda\|u\|_{b,q^\ast}^{s}+\theta\left(\frac{1}{\widetilde{q}}-\frac{1}{p^\ast}\right)\|u\|_{b,q^\ast}^{q^\ast} \\
		& + C_1\min\left\{\theta^{-\frac{p}{p^*-p}},\theta^{-\frac{q}{q^*-q}}\right\}-C_2\lambda^{\frac{q}{q-s}},
	\end{align*}
	where $C_1:=\left(\displaystyle\frac{1}{\widetilde{q}}-\frac{1}{p^\ast}\right)\widetilde{C}_1$. That is, we get
	\begin{align} \label{PL.S.E4}
		c\geq h_{\lambda,\theta}\left(\|u\|_{b,q^\ast}\right)+C_1\min\left\{\theta^{-\frac{p}{p^*-p}},\theta^{-\frac{q}{q^*-q}}\right\}-C_2\lambda^{\frac{q}{q-s}},
	\end{align}
	where
	\begin{align*}
		h_{\lambda,\theta}(t):=\theta{d_1} t^{q^\ast}-\lambda{d_2} t^s\quad\text{for } t\geq 0,
	\end{align*}
	with
	\begin{align*}
		{d_1}:=\left(\frac{1}{\widetilde{q}}-\frac{1}{p^\ast}\right)\quad\text{and}\quad{d_2}:=2\left(\frac{1}{s}-\frac{1}{\widetilde{q}}\right)\|w\chi_{\{b>0\}}b^{-\frac{s}{q^\ast}}\|_{L^{\frac{q^\ast}{q^\ast-s}}(\Omega)}.
	\end{align*}
	Since
	\begin{align*}
		\min_{t\geq 0}\, h_{\lambda,\theta}(t)=h_{\lambda,\theta}\left( \left(\frac{\lambda{d_2} s}{\theta{d_1} q^\ast}\right)^{\frac{1}{q^\ast-s}} \right)=-\frac{q^*-s}{q^*}\left(\frac{s}{q^\ast}\right)^{\frac{s}{q^\ast-s}}(\lambda{d_2})^{\frac{q^\ast}{q^\ast-s}}(\theta{d_1})^{-\frac{s}{q^\ast-s}},
	\end{align*}
	we derive from \eqref{PL.S.E4} that
	\begin{align*}
		c\geq C_1\min\left\{\theta^{-\frac{p}{p^*-p}},\theta^{-\frac{q}{q^*-q}}\right\}-C_2\lambda^{\frac{q}{q-s}}-C_3\lambda^{\frac{q^\ast}{q^\ast-s}}\theta^{-\frac{s}{q^\ast-s}},
	\end{align*}
	with a suitable $C_3:=C_3\left(N,p,q,s,w,b\right)>0$, which contradicts to \eqref{PS_c2}. This proves Step 2.

\vskip5pt
	\noindent{\bf Step 3:}  $u_n\to u$ in $W_0^{1,\mathcal{H}}(\Omega)$ as $n\to\infty$.
\vskip5pt

	By Step 2, $\nu_i=\overline{\nu}_i=0$ for any $i\in\mathcal{I}$ and so by \eqref{PL.S.m-n1} and \eqref{PL.S.m-n2}, we obtain
	\begin{align*}
		\int_{\Omega}b_0(x)|u_n|^{p^\ast}\,\mathrm{d} x\to \int_{\Omega}b_0(x)|u|^{p^\ast}\,\mathrm{d} x\quad\text{and}\quad \int_{\Omega}b(x)|u_n|^{q^\ast}\,\mathrm{d} x\to \int_{\Omega}b(x)|u|^{q^\ast}\,\mathrm{d} x.
	\end{align*}
	From this and \eqref{1} we conclude that
	\begin{equation}
		\label{BL}
		\int_{\Omega}b_0(x)|u_n-u|^{p^\ast}\,\mathrm{d} x\to 0\quad\text{and}\quad\int_{\Omega}b(x)|u_n-u|^{q^\ast}\,\mathrm{d} x\to 0
	\end{equation}
	in view of the Br\'ezis-Lieb lemma (see e.g. Ho--Sim \cite[Lemma 3.6]{Ho-Sim-2016}).
	From \eqref{ps} we have $\langle J'(u_n),u_n-u\rangle\to0$ as $n\to\infty$, which yields
	\begin{align*}
		o_n(1)&= \int_\Omega A(x,\nabla u_n)\cdot\nabla(u_n-u)\,\mathrm{d} x-\lambda \int_{\Omega}w(x)|u_n|^{s-2}u_n(u_n-u)\,\mathrm{d} x \\
		& \quad+\theta\int_{\Omega}B(x,u_n)(u_n-u)\,\mathrm{d} x.
	\end{align*}
	From this, by H\"older's inequality combined with Proposition \ref{prop.CE2} and \eqref{BL}, we get
	\begin{equation}
		\label{aaa}
		\lim_{n\to\infty}\int_\Omega A(x,\nabla u_n)\cdot\nabla(u_n-u)\,\mathrm{d} x=0.
	\end{equation}
	Then, combining \eqref{1}, \eqref{aaa} and Proposition \ref{P2.4}, we deduce that $u_n\to u$ in $W_0^{1,\mathcal{H}}(\Omega)$. The proof  is complete.
\end{proof}

%*******************************************************************************
\section{Proof of Theorem \ref{Theo.NS}}\label{Section_4}
%*******************************************************************************

For the proof of Theorem \ref{Theo.NS} we employ the idea by Ho--Sim \cite{Ho-Sim-2021}. However, we minimize the functional $J_+$ on a suitable ball
\begin{align*}
	B_r=\left\{u\in W_0^{1,\mathcal{H}}(\Omega)\colon\,\, \|u\|< r\right\},
\end{align*}
working with the Luxemburg norm $\|\cdot\|$.

Before we prove Theorem \ref{Theo.NS}, we first need the following lemma.

\begin{lemma}\label{negativo}
	Let hypotheses \eqref{H1}--\eqref{H3} be satisfied and let $\lambda>\lambda_0$ with $\lambda_0$ as defined in \eqref{la*}. Then, there exists $\widetilde{\theta}_\ast=\widetilde{\theta}_\ast(\lambda)>0$ such that for any $\theta\in \left(0,\widetilde{\theta}_\ast\right)$, there exist $r$, $\beta>0$ such that
	\begin{align*}
		\underset{u\in \partial B_r}{\inf} J_+(u)\geq \beta>0> \underset{u\in B_r}{\inf} J_+(u).
	\end{align*}
\end{lemma}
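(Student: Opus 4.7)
The plan is to split the claim into its two parts: the strict inequality $\inf_{B_r}J_+<0$ comes from an explicit element $u^\ast$ built from the definition of $\lambda_0$, while the lower bound $\inf_{\partial B_r}J_+\geq\beta>0$ follows from combining \eqref{Cond-H3}, \eqref{Ce} and the modular estimates in Proposition~\ref{P.m-n}.

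For the first part, I would use $\lambda>\lambda_0$ to pick $\phi^\ast\in\mathcal{W}_+$ with $\lambda_{\phi^\ast}<\lambda$, where $\lambda_\phi$ denotes the argument of the infimum \eqref{la*} evaluated at $\phi$. Set $A:=\|\nabla\phi^\ast\|_p^p$, $B:=\|\nabla\phi^\ast\|_{a,q}^q$, $C:=\int_\Omega w(\phi^\ast_+)^s\,\mathrm dx$, and
$$g(t):=\frac{t^pA}{p}+\frac{t^qB}{q}-\frac{\lambda t^sC}{s}.$$
Writing $\beta_1:=(s-p)/(q-p)$, $\beta_2:=(q-s)/(q-p)$, one checks $p\beta_2+q\beta_1=s$, so weighted AM-GM yields $\min_{t>0}\bigl[(A/p)t^{p-s}+(B/q)t^{q-s}\bigr]=C_0(A/p)^{\beta_2}(B/q)^{\beta_1}$, attained at a unique $t_0>0$. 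By definition this minimum equals $\lambda_{\phi^\ast}C/s<\lambda C/s$, hence $g(t_0)=t_0^s\bigl(C_0(A/p)^{\beta_2}(B/q)^{\beta_1}-\lambda C/s\bigr)<0$. Since $\widehat B\geq 0$, one gets $J_+(t_0\phi^\ast)\leq g(t_0)<0$; set $u^\ast:=t_0\phi^\ast$.

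For the second part, apply \eqref{Cond-H3} to bound $\int_\Omega w u_+^s\,\mathrm dx\leq C_w\|\nabla u\|_{a,q}^s\leq C_w\rho^{s/q}$, apply \eqref{Ce} to bound $\int_\Omega\widehat B(x,u_+)\,\mathrm dx\leq C_B(\rho^{p^\ast/p}+\rho^{q^\ast/q})$, and combine with $\int_\Omega\mathcal A(x,\nabla u)\,\mathrm dx\geq\rho/q$, where $\rho:=\rho_{\mathcal H}(\nabla u)=\|\nabla u\|_p^p+\|\nabla u\|_{a,q}^q$. This produces
$$J_+(u)\geq G(\rho):=\frac{\rho}{q}-\frac{\lambda C_w}{s}\rho^{s/q}-\theta C_B\bigl(\rho^{p^\ast/p}+\rho^{q^\ast/q}\bigr).$$
A direct computation shows that $G_0(\rho):=\rho/q-(\lambda C_w/s)\rho^{s/q}$ vanishes at $\rho_0:=(\lambda C_w q/s)^{q/(q-s)}$, that $\rho\mapsto G_0(\rho)/\rho$ is strictly increasing on $(0,\infty)$, and therefore $G_0(\rho)\geq c_0\rho/q$ for all $\rho\geq 2\rho_0$, with $c_0:=1-2^{(s-q)/q}\in(0,1)$. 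I then choose $r\geq\max\{1,\,(2\rho_0)^{1/p},\,\|u^\ast\|+1\}$. By Proposition~\ref{P.m-n}(iv) every $u\in\partial B_r$ satisfies $\rho\in[r^p,r^q]$, so $G_0(\rho)\geq c_0r^p/q$ there, while the critical tail obeys $\rho^{p^\ast/p}+\rho^{q^\ast/q}\leq 2r^{q^\ast}$ (since $q^\ast\geq qp^\ast/p$ and $r\geq 1$). Taking $\widetilde\theta_\ast:=c_0r^{p-q^\ast}/(4qC_B)$ yields $J_+(u)\geq c_0r^p/(2q)=:\beta>0$ on $\partial B_r$ for every $\theta\in(0,\widetilde\theta_\ast)$, while $u^\ast\in B_r$ gives $\inf_{B_r}J_+\leq g(t_0)<0$.

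The hard part is precisely the sandwich exponent $s\in(p,q)$: because $\rho^{s/q}\gg\rho$ near $\rho=0$, the natural bound $J_+\geq G(\rho)$ is useless for small $\rho$, and unlike the sublinear case of \cite{Farkas-Fiscella-Winkert-2022} one cannot take $r$ small. The remedy is to pick $r$ past the threshold $(2\rho_0)^{1/p}$ where $G_0$ has already crossed zero and the $q$-term dominates the $s$-term; the scale invariance of $\lambda_\phi$ ensures $u^\ast$ can be placed inside any such $B_r$, and choosing $\theta$ small keeps the critical tail below $c_0r^p/(2q)$ on the bounded range of $\rho$ seen by $\partial B_r$.
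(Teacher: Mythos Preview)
Your proof is correct and follows the same overall strategy as the paper: build a negative-energy test function from $\lambda>\lambda_0$, pick $r$ large enough that the sandwich term is dominated on $\partial B_r$, then take $\theta$ small to kill the critical tail. The execution of the boundary estimate differs slightly: the paper keeps $\|\nabla u\|_p^p$ and $\|\nabla u\|_{a,q}^q$ separate and runs a two-case argument on the size of $\|\nabla u\|_{a,q}$ (small $\Rightarrow$ the $s$-term is absorbed by $\frac{1}{2q}\|u\|^p$ via the explicit choice of $r$; large $\Rightarrow$ the $q$-term absorbs the $s$-term directly), whereas you pass at once to the modular $\rho=\rho_{\mathcal H}(\nabla u)$, bound $\|\nabla u\|_{a,q}^s\le\rho^{s/q}$, and use the monotonicity of $\rho\mapsto G_0(\rho)/\rho$ to get $G_0(\rho)\ge c_0\rho/q$ once $\rho\ge 2\rho_0$. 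Your packaging is a bit cleaner (no case split) at the price of a slightly looser intermediate estimate; the paper's case split is more hands-on but yields the same final bound $J_+(u)\ge \frac{1}{2q}r^p-\mathrm{const}\cdot\theta\, r^{q^*}$ on $\partial B_r$. Both routes lead to the same $\widetilde\theta_\ast\sim r^{\,p-q^*}$ and the same $\beta$.
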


\begin{proof}
	Fix $\lambda>\lambda_0$. Thus, there exists $\psi\in \mathcal{W}_+$ such that
	\begin{align}\label{PT1.la}
		C_0\left(\frac{\int_\Omega a(x)|\nabla \psi|^q\,\mathrm{d} x}{q}\right)^{\frac{s-p}{q-p}}\left(\frac{\int_\Omega |\nabla \psi|^p\,\mathrm{d} x}{p}\right)^{\frac{q-s}{q-p}}\frac{s}{\int_\Omega w(x)\psi_+^s\,\mathrm{d} x}<\lambda.
	\end{align}
	Now, for $t>0$, we have
	\begin{align*}
		J_+(t\psi)=t^p g_\lambda(t)-\theta\xi(t),
	\end{align*}
	where
	\begin{align*}
		g_\lambda(t):=\alpha_1-\alpha_2\lambda t^{s-p}+\alpha_3t^{q-p}
	\end{align*}
	with
	\begin{align*}
		\alpha_1:=\frac{1}{p}\|\nabla \psi\|_p^p>0,\quad \alpha_2:=\frac{1}{s}\int_\Omega w(x)\psi_+^{s}\,\mathrm{d} x>0\quad \text{and}\quad \alpha_3:=	\frac{1}{q}\|\nabla \psi\|_{a,q}^q>0,
	\end{align*}
	and
	\begin{align*}
		\xi(t):= \frac{1}{p^\ast}\left(\int_\Omega b_0(x)\psi_+^{p^\ast}\,\mathrm{d} x\right)t^{p^\ast}+\frac{1}{q^\ast}\left(\int_\Omega b(x)\psi_+^{q^\ast}\,\mathrm{d} x\right)t^{q^\ast}.
	\end{align*}
	Due to \eqref{PT1.la} and considering \eqref{C0}, we have
	\begin{align*}
		\min_{t>0}\, g_\lambda(t)
		&=g_\lambda\left(\left(\frac{s-p}{q-p}\alpha_2\alpha_3^{-1}\lambda\right)^{\frac{1}{q-s}}\right)\\
		&=\alpha_1-\frac{q-s}{q-p}\left(\frac{s-p}{q-p}\right)^{\frac{s-p}{q-s}}\alpha_3^{-\frac{s-p}{q-s}}\alpha_2^{\frac{q-p}{q-s}}\lambda^{\frac{q-p}{q-s}}<0.
	\end{align*}
	Thus, by setting $t_0=t_0(\lambda)>0$ as
	\begin{align*}
		t_0:=\left(\frac{s-p}{q-p}\alpha_2\alpha_3^{-1}\lambda\right)^{\frac{1}{q-s}},
	\end{align*}
	we conclude that $g_\lambda(t_0)<0$, and so
	\begin{align}\label{PT1.t0}
		J_+(t_0\psi)=[t_0]^pg_\lambda(t_0)-\theta\xi(t_0)<0.
	\end{align}
	We take $r=r(\lambda)>0$ as
	\begin{align}
		\label{PT1.r}
		r:=\max\left\{1+t_0\|\psi\|,\left(\frac{2qC_w\lambda}{s}\right)^{\frac{pq}{q-s}}\right\},
	\end{align}
	where $C_w$ is given in \eqref{H3}. Then, for $\|u\|=r  >1$, by \eqref{H3}, Propositions \ref{P.m-n} and \eqref{Ce}, it follows that
	\begin{equation}\label{PT1.E1}
		\begin{aligned}
			J_+(u) & \geq \frac{1}{p}\|\nabla u\|_p^p+\frac{1}{q}\|\nabla u\|_{a,q}^q-\frac{C_w\lambda}{s}\|\nabla u\|_{a,q}^s-\frac{\theta}{p^\ast}\max\left\{\|u\|_{\mathcal{B}}^{p^\ast},\|u\|_{\mathcal{B}}^{q^\ast}\right\} \\
			& \geq \frac{1}{p}\|\nabla u\|_p^p+\frac{1}{q}\|\nabla u\|_{a,q}^q-\frac{C_w\lambda}{s}\|\nabla u\|_{a,q}^s-\frac{\theta C_e^{q^*}}{p^\ast}\|u\|^{q^*}.
		\end{aligned}
	\end{equation}
	We claim that there exist $\widetilde{\theta}_\ast=\widetilde{\theta}_\ast(\lambda)>0$ such that, for any $\theta\in \left(0,\widetilde{\theta}_\ast\right)$, we have
	\begin{equation}
		\label{PT1.Geo.partial B_r}
		J_+(u)\geq\beta,\quad\text{for any } u\in\partial B_r,
	\end{equation}
	with a suitable $\beta=\beta(\lambda,\theta)>0$.
	For this, we distinguish the following two cases:
	\begin{enumerate}[leftmargin=1.5cm]
		\item[\bf Case 1:]
			Let $\displaystyle\frac{1}{q}\|\nabla u\|_{a,q}^q\leq \displaystyle\frac{2C_w\lambda}{s}\|\nabla u\|^s_{a,q}$, i.e., $\|\nabla u\|_{a,q}\leq \left(\displaystyle\frac{2qC_w\lambda}{s}\right)^{\displaystyle\frac{1}{q-s}}$.\\
			From \eqref{PT1.r} we have
			\begin{align*}
				\frac{C_w\lambda}{s}\|\nabla u\|^s_{a,q}
				\leq \frac{C_w\lambda}{s}\left(\frac{2qC_w\lambda}{s}\right)^{\frac{s}{q-s}}\leq \frac{1}{2q}r^p=\frac{1}{2q}\|u\|^p.
			\end{align*}
			Using these estimates, we derive from Proposition \ref{P.m-n} and \eqref{PT1.E1} that
			\begin{align*}
				J_+(u)&\geq \frac{1}{q}\|u\|^p-\frac{1}{2q}\|u\|^p-\frac{\theta C_e^{q^*}}{p^\ast}\|u\|^{q^*}\\&=\frac{C_e^{q^*}r^{q^*}}{p^\ast}\left(\frac{p^*}{2qC_e^{q^*}}r^{p-q^*}-\theta\right).
			\end{align*}
		\item[\bf Case 2:]
			Let $\displaystyle\frac{1}{q}\|\nabla u\|_{a,q}^q\geq \displaystyle\frac{2C_w\lambda}{s}\|\nabla u\|^s_{a,q}$.\\
			In this case, we easily derive from Proposition \ref{P.m-n} and \eqref{PT1.E1} that
			\begin{align*}
				J_+(u) & \geq \frac{1}{p}\|\nabla u\|_{p}^p+\frac{1}{2q}\|\nabla u\|_{a,q}^q-\frac{\theta C_e^{q^*}}{p^\ast}\|u\|^{q^*}\\&\geq \frac{1}{2q}\|u\|^p-\frac{\theta C_e^{q^*}}{p^\ast}\|u\|^{q^*} \\
				& =\frac{C_e^{q^*}r^{q^*}}{p^\ast}\left( \frac{p^*}{2qC_e^{q^*}}r^{p-q^*}-\theta\right).
			\end{align*}
	\end{enumerate}
	In any case, by taking
	\begin{align*}
		\widetilde{\theta}_\ast:=\displaystyle\frac{p^*}{2qC_e^{q^*}}r^{p-q^*}\quad\text{and}\quad\beta:=\frac{C_e^{q^*}r^{q^*}}{p^\ast}(\widetilde{\theta}_\ast-\theta),
	\end{align*}
	the statement \eqref{PT1.Geo.partial B_r} holds true for any $\theta\in \left(0,\widetilde{\theta}_\ast\right)$.

	Finally, note that $t_0\psi\in B_r$ by \eqref{PT1.r}. Hence, \eqref{PT1.t0} yields
	\begin{align*}
		\underset{u\in B_r}{\inf} J_+(u)\leq J_+(t_0\psi)<0.
	\end{align*}
	This and \eqref{PT1.Geo.partial B_r} complete the proof.
\end{proof}

\begin{proof}[Proof of Theorem \ref{Theo.NS}]
	Note that for $\theta\in(0,1)$, the right-hand side of \eqref{PS_c2} can be rewritten as
	\begin{align*}
		\overline{c}_{\lambda,\theta}:=C_1\theta^{-\frac{q}{q^*-q}}-C_3\lambda^{\frac{q^\ast}{q^\ast-s}}\theta^{-\frac{s}{q^\ast-s}}-C_2\lambda^{\frac{q}{q-s}}.
	\end{align*}
	For the case $b\equiv 0$, we take $\overline{c}_{\lambda,\theta}:=C_1\theta^{-\frac{p}{p^*-p}}-C_2\lambda^{\frac{q}{q-s}}$, which is the right-hand side of \eqref{PS_c2'}. For a fixed $\lambda>0$, since $s<q$, there exists $\widehat{\theta}_*=\widehat{\theta}_*(\lambda)>0$ sufficiently small such that $\overline{c}_{\lambda,\theta}>0$ for any $\theta\in\left(0,\widehat{\theta}_*\right)$. Thus, let us fix $\lambda>\lambda_0$, with $\lambda_0$ as defined in \eqref{la*}. Next, let us fix $\theta\in\left(0,\theta_*\right)$ with $\theta_*:=\min\left\{\widehat{\theta}_*,\widetilde{\theta}_*,1\right\}$, where $\widetilde{\theta}_*$ is as in Lemma~\ref{negativo}.
	Thanks to Lemma \ref{negativo}, we can apply Ekeland's variational principle to $J_+$ which provides a minimizing sequence $\{u_n\}_{n\in\mathbb{N}}\subset B_r$ such that
	\begin{align*}
		J_+(u_{n})\rightarrow m_r \quad\text{and}\quad J'_+(u_{n})\rightarrow 0,
	\end{align*}
	where
	\begin{align*}
		m_r:=\inf_{u\in B_r} J_+(u).
	\end{align*}
	Furthermore, since $m_r<0<\overline{c}_{\lambda,\theta}$ due to Lemma \ref{negativo}, we can apply Lemma \ref{Le.PS} for the sequence $\{u_n\}_{n\in\mathbb{N}}$, so that there exists $u\in W_0^{1,\mathcal{H}}(\Omega)$ such that $u_n\to u$ in $W_0^{1,\mathcal{H}}(\Omega)$. Hence
	\begin{align}\label{PT1.2}
		J_+'(u)=0\quad \text{and}\quad J_+(u)=m_r<0.
	\end{align}
Thus, we have
	\begin{align*}
	0&= \langle J_+'(u), u_-\rangle =\int_\Omega\mathcal A\left(x,\nabla u_-\right)\,\mathrm{d} x=0,
\end{align*}
where $u_-:=\max\{-u,0\}$. It follows that $u_-=0$ a.e.\,in $\Omega$ and so $u\geq 0$ a.e.\,in $\Omega$. From this and \eqref{PT1.2} we obtain
\begin{align*}
	J'(u)=0\quad \text{and}\quad J(u)=m_r<0.
\end{align*}
Therefore, $u$ is a nontrivial nonnegative solution to problem~\eqref{Eq.Sandwich}. This proves the assertion of the theorem.
\end{proof}

%*******************************************************************************
\section{Proof of Theorem \ref{Theo.MS}}\label{Section_5}
%*******************************************************************************

The proof of Theorem \ref{Theo.MS} is inspired by Ho--Sim \cite[Theorem 1.3]{Ho-Sim-2023}. However, unlike the case of the $(p,q)$-Laplacian, $\|\nabla \cdot\|_{a,q}$ is no longer an equivalent norm on $W_0^{1,\mathcal{H}}(\Omega)$, and this makes the situation for the double phase operator much more complicated. Furthermore, we require the existence of a ball $B\subset\Omega_+$ as set in Theorem \ref{Theo.MS}, in order to get the technical result of Lemma \ref{genus.k}.

Let $\theta>0$ and $\lambda>0$. We easily observe that the functional $J$ is not bounded from below in $W_0^{1,\mathcal{H}}(\Omega)$ because of the presence of the critical Sobolev term. For this, we mainly work with a truncated functional. Let $1<t_1<t_2$ and choose a cut-off function $\phi\in C_c^\infty(\R)$ being non-increasing with $0\leq \phi(\cdot)\leq 1$ such that
\begin{align}\label{cutoff}
	\phi(t)=1\quad\text{for } |t|\leq t_1\quad\text{and}\quad \phi(t)=0\quad\text{for }|t|\geq t_2.
\end{align}
Now we can define the truncated functional ${J_\phi}\colon W_0^{1,\mathcal{H}}(\Omega) \to\R$ as
\begin{align}\label{jphi}
	{J_\phi}(u):=\int_\Omega \mathcal{A}(x,\nabla u) \,\mathrm{d} x-\frac{\lambda}{s}\int_\Omega w(x)|u|^s\,\mathrm{d} x
	+\phi(\|u\|)\,\theta\int_\Omega \widehat{B}(x,u) \,\mathrm{d} x.
\end{align}
Obviously, ${J_\phi}\in C^1(W_0^{1,\mathcal{H}}(\Omega),\R)$ by the definition of $\phi$ and by Colasuonno--Squassina \cite[Proposition 3.2]{Colasuonno-Squassina-2016}.

In order to prove the existence of multiple solutions for \eqref{Eq.Sandwich}, we need the Krasnoselskii's genus theory. For this, we first recall the definition of the genus and denote
\begin{align*}
	\Sigma=\left\{A\subset W_0^{1,\mathcal{H}}(\Omega)\setminus\{0\}\colon\,\, A \text{  is closed and symmetric} \right\}.
\end{align*}
The genus  $\gamma(A)$ of $A\in\Sigma$	is defined as the smallest positive integer $d$ such that there exists an odd continuous map from $A$ to $\R^{d}\setminus\{0\}$.  If such $d$ does not exist, then we set $\gamma(A)=\infty$. Also, we define $\gamma(\emptyset)=0$. We refer to Rabinowitz \cite{Rabinowitz-1986} for more details on this topic.

We get a suitable property for sublevels of functional ${J_\phi}$.

\begin{lemma}\label{genus.k}
	Let hypotheses \eqref{H1}--\eqref{H3} be satisfied. Then, for any $j\in\N$, there exists $d_j>0$ such that for any $\lambda>d_j$ and any $\theta>0$, there exists $\varepsilon_j>0$ such that ${J_\phi}^{-\varepsilon_j}\in\Sigma$ and
	\begin{align*}
		\gamma({J_\phi}^{-\varepsilon_j}) \geq j,
	\end{align*}
	with ${J_\phi}^{-\varepsilon_j}:=\{u \in W_0^{1,\mathcal{H}}(\Omega)\colon\,\, J_\phi(u) \leq -\varepsilon_j\}.$
\end{lemma}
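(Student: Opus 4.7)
The plan is to construct, for each $j\in\N$, a $j$-dimensional subspace $V_j\subset W_0^{1,\mathcal H}(\Omega)$ and a radius $t^*>t_2$ (with $t_2$ as in \eqref{cutoff}) such that $J_\phi$ is uniformly bounded above by a negative constant on the sphere $\{t^*v:v\in V_j,\,\|v\|=1\}$ as soon as $\lambda$ exceeds a threshold $d_j$ depending only on $j$. Exploiting the ball $B\subset\Omega_+$ on which $w>0$, pick any $j$ linearly independent $\varphi_1,\dots,\varphi_j\in C_c^\infty(B)$ and let $V_j$ be their linear span, viewed as a subspace of $W_0^{1,\mathcal H}(\Omega)$ via extension by zero. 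Because $w>0$ a.e.\,on $B$ and every $v\in V_j$ is supported in $B$, the map $v\mapsto \bigl(\int_\Omega w|v|^s\,\mathrm{d}x\bigr)^{1/s}$ is a norm on $V_j$, and the equivalence of norms on the finite-dimensional $V_j$ provides constants $c_j,\,C_{p,j},\,C_{q,j}>0$ depending only on $V_j$ such that
\begin{equation*}
\|\nabla v\|_p^p\leq C_{p,j},\qquad \|\nabla v\|_{a,q}^q\leq C_{q,j},\qquad \int_\Omega w(x)|v|^s\,\mathrm{d}x\geq c_j,
\end{equation*}
for every $v\in V_j$ with $\|v\|=1$.

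Setting $t^*:=t_2+1$, we get $\phi(\|t^*v\|)=\phi(t^*)=0$ for every such $v$, so the cut-off critical contribution in \eqref{jphi} is annihilated and
\begin{equation*}
J_\phi(t^*v)=\frac{(t^*)^p}{p}\|\nabla v\|_p^p+\frac{(t^*)^q}{q}\|\nabla v\|_{a,q}^q-\frac{\lambda(t^*)^s}{s}\int_\Omega w(x)|v|^s\,\mathrm{d}x\leq E_j(\lambda),
\end{equation*}
where $E_j(\lambda):=\frac{(t^*)^p C_{p,j}}{p}+\frac{(t^*)^q C_{q,j}}{q}-\frac{\lambda(t^*)^s c_j}{s}$. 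This is the heart of the argument: by evaluating at a radius beyond the support of $\phi$, the dependence on $\theta$ disappears entirely. The main obstacle I foresee is precisely making $d_j$ independent of $\theta$ despite $\theta$ being arbitrary, and the cut-off trick resolves it cleanly; if one tried instead to work inside $\{\|u\|\leq t_1\}$, the critical term would force $d_j$ to depend on $\theta$, and the scaling $p<s<p^*$ would no longer cooperate.

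Finally, define
\begin{equation*}
d_j:=\frac{s}{(t^*)^s c_j}\left(\frac{(t^*)^p C_{p,j}}{p}+\frac{(t^*)^q C_{q,j}}{q}+1\right),
\end{equation*}
so that for every $\lambda>d_j$ one has $E_j(\lambda)<-1$. Taking $\varepsilon_j:=-E_j(\lambda)/2>0$, the set $A_j:=\{t^*v:v\in V_j,\,\|v\|=1\}$ is contained in $J_\phi^{-\varepsilon_j}$. Since $V_j$ has real dimension $j$, $A_j$ is homeomorphic via an odd linear map to $S^{j-1}$, hence $\gamma(A_j)=j$ by the standard properties of Krasnoselskii's genus. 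The sublevel set $J_\phi^{-\varepsilon_j}$ is closed by continuity of $J_\phi$, symmetric because $J_\phi(-u)=J_\phi(u)$ (all integrands are even in $u$ and $\phi(\|u\|)=\phi(\|-u\|)$), and does not contain $0$ since $J_\phi(0)=0>-\varepsilon_j$; thus $J_\phi^{-\varepsilon_j}\in\Sigma$, and the monotonicity of the genus yields $\gamma(J_\phi^{-\varepsilon_j})\geq\gamma(A_j)=j$, which completes the plan.
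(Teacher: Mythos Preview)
Your proof is correct for the lemma as stated, and the skeleton --- build a $j$-dimensional space of functions supported in the ball $B\subset\Omega_+$, use norm equivalence, and locate a sphere in the negative sublevel set --- matches the paper's. The genuine difference is in how the $\theta$-dependent critical term is neutralized. You push the radius out to $t^*=t_2+1$ so that $\phi(t^*)=0$ and the term vanishes identically. The paper instead observes that $-\phi(\|u\|)\,\theta\int_\Omega\widehat{B}(x,u)\,\mathrm{d}x\le 0$ for \emph{every} $u$ (since $\phi\ge 0$, $\theta>0$, $\widehat{B}\ge 0$; the sign in \eqref{jphi} is a typo, as \eqref{PT2.Ph1}--\eqref{PT2.Ph3} make clear), so this term can simply be dropped from above at any radius. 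They then optimize in $\tau$ and obtain the explicit $d_j=C_4\delta_j^{2s}$ of \eqref{PT2.dk}, which depends only on $p,q,s$ and the norm-equivalence constant $\delta_j$ --- in particular, it is independent of $t_1,t_2$. Your $d_j$, by contrast, depends on $t_2$ through $t^*$.

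That dependence is harmless for the lemma in isolation but becomes problematic downstream: after this lemma the paper specializes $t_1=T_1(\lambda,\theta)$ and $t_2=T_2(\lambda,\theta)$, and then in \eqref{PT2.lak}--\eqref{provare} needs $d_j$ to be a fixed threshold below $\lambda_j$, chosen \emph{before} $\lambda$ and $\theta$. With your $d_j=d_j(t_2)$ and $t_2=T_2(\lambda,\theta)$, the requirement ``$\lambda>d_j$'' would become circular. So while your argument establishes the lemma, the paper's route buys a $\phi$-independent threshold that the later proof of Theorem~\ref{Theo.MS} actually uses. Incidentally, your remark that working inside $\{\|u\|\le t_1\}$ ``would force $d_j$ to depend on $\theta$'' overlooks this simpler sign observation: the critical contribution is nonpositive everywhere, so it can be discarded outright without any cut-off trick.
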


\begin{proof}
	Fix $\theta>0$ and let $B\subset\Omega_+$ be as in Theorem \ref{Theo.MS}. For any $j\in\mathbb N$, we define
	\begin{align*}
		X_j:=\operatorname{span}\left\{\varphi_1,\varphi_2,\ldots,\varphi_j\right\},
	\end{align*}
	where $\varphi_j$ is an eigenfunction corresponding to the $j$-th eigenvalue of the following problem
	\begin{align*}
		-\Delta u=\mu u  \quad\text{in } B,\quad u=0\quad \text{in } \partial B,
	\end{align*}
	which can be extended to $\Omega$ by putting $\varphi_j(x)=0$ for $x\in\Omega\setminus B$. Since all norms on $X_j$ are mutually equivalent, noticing  that $\operatorname{supp}\,(u)\subset B$ for $u\in X_j$ and $w(x)>0$ for a.a.\,$x\in B$, we can find $\delta_j>1$ such that
	\begin{align}\label{equi.norms}
		\delta_j^{-1}\max\left\{\|\nabla u\|_{L^{p}(B)},\|\nabla u\|_{L^{q}(a,B)}\right\}\leq \|u\|\leq \delta_j\|u\|_{L^{s}(w,B)},
	\end{align}
	for any $u\in X_j$. Without any loss of generality, we can choose $\{\delta_j\}_{j\in\mathbb{N}}$ such that $\delta_j<\delta_{j+1}$ for any $j\in\N$. We have
	\begin{align*}{J_\phi}(u)\leq \frac{1}{p}\|\nabla u\|_{L^{p}(B)}^{p}+\frac{1}{q}\|\nabla u\|_{L^{q}(a,B)}^{q}-\frac{\lambda }{s}\|u\|_{L^s(w,B)}^{s},\quad\text{for any } u\in X_j.
	\end{align*}
	From this and \eqref{equi.norms}, for any ${\tau}>0$, we infer that
	\begin{align}\label{T}
		{J_\phi}(u)\leq \frac{1}{p}(\delta_j{\tau})^{p}+\frac{1}{q}(\delta_j{\tau})^{q}-\frac{\lambda }{s}(\delta_j^{-1}{\tau})^{s}={\tau}^{p}\,h_\lambda({\tau}),
	\end{align}
	for any $u\in \partial B_{\tau}\cap X_j$, where
	\begin{align*}
		h_\lambda({\tau}):=\alpha_j{\tau}^{q-p}+\beta_j-\gamma_j\lambda{\tau}^{s-p}
	\end{align*}
	with
	\begin{align*}
		\alpha_j:=q^{-1}\delta_j^{q},\quad \beta_j:=p^{-1}\delta_j^{p},\quad\gamma_j:=s^{-1}\delta_j^{-s}.
	\end{align*}
	Let us set $T^*_j=T^*_j(\lambda)>0$ as
	\begin{align*}
		T^*_j:=\left[\frac{(s-p)\gamma_j\,\lambda}{(q-p)\alpha_j}\right]^{\frac{1}{q-s}}.
	\end{align*}
	Then, for
	\begin{align}\label{PT2.dk}
		d_j:=\left(\frac{q-p}{q-s}\right)^{\frac{q-s}{q-p}}\left(\frac{q-p}{s-p}\right)^{\frac{s-p}{q-p}}\alpha_j^{\frac{s-p}{q-p}}\beta_j^{\frac{q-s}{q-p}}\gamma_j^{-1}=C_4\delta_j^{2s}
	\end{align}
	with $C_4=C_4(p,q,s)$, it holds that
	\begin{equation}
		\label{bar-h(T*)}
		h_\lambda(T^*_j)=\beta_j-\frac{q-s}{q-p}\left(\frac{s-p}{q-p}\right)^{\frac{s-p}{q-s}}\alpha_j^{-\frac{s-p}{q-s}}\gamma_j^{\frac{q-p}{q-s}}\lambda^{\frac{q-p}{q-s}}<0,\quad\text{for any } \lambda>d_j.
	\end{equation}
	Thus, for any $\lambda>d_j$, we get
	\begin{align*}{J_\phi}(u)\leq (T^*_j)^{p}h_\lambda(T^*_j)=:-\varepsilon_j<0,\quad\text{for any } u\in \partial B_{T^*_j}\cap X_j
	\end{align*}
	in view of \eqref{T} and \eqref{bar-h(T*)}. Hence, $\partial B_{T^*_j}\cap X_j\subset {J_\phi}^{-\varepsilon_j}$. Clearly, $\partial B_{T^*_j}\cap X_j\in\Sigma$ and ${J_\phi}^{-\varepsilon_j}\in\Sigma$. Therefore, by standard properties of the genus as in Rabinowitz \cite[Proposition 7.7]{Rabinowitz-1986}, we obtain
	\begin{align*}
		\gamma( {J_\phi}^{-\varepsilon_j})\geq \gamma(\partial B_{T^*_j}\cap X_j)=j.
	\end{align*}
	The proof is complete.
\end{proof}

Now, we are going to construct an appropriate minimax sequence of negative critical values for the truncated functional $J_\phi$.	For any $j\in\N$, define the minimax values $c_j=c_j(\lambda,\theta)$ as
\begin{equation}
	\label{cn}
	c_j:= \inf_{A\in \Sigma_j} \sup_{u \in
		A}{J_\phi}(u),\quad\text{where }\Sigma_j:=\{A\in\Sigma\colon\,\, \gamma(A) \geq j\}.
\end{equation}
This definition is well defined since $\partial(X_j\cap B_{\tau})\in\Sigma_j$ for any ${\tau}>0$. Clearly, for any $j\in\N$, it holds that $c_{j+1}\leq c_j$.

We have the following properties for $\{c_j\}_{j\in\mathbb{N}}$.

\begin{lemma}\label{Le.c_n<0}
	Let hypotheses \eqref{H1}--\eqref{H3} be satisfied and let $\theta>0$, $\lambda>d_j$ with $d_j$ as given in \eqref{PT2.dk}, and let $\{c_j\}_{j\in\N}$ be defined as in \eqref{cn}. Then, for any $j\in\N$  we have that
	\begin{align*}
		-\infty<c_j<0.
	\end{align*}
\end{lemma}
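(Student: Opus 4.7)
The plan is to establish the two inequalities $c_j<0$ and $c_j>-\infty$ separately.

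For the strict upper bound $c_j<0$, I would apply Lemma~\ref{genus.k} directly: since $\lambda>d_j$, there exists $\varepsilon_j>0$ such that $A^{\star}:={J_\phi}^{-\varepsilon_j}$ belongs to $\Sigma$ and has $\gamma(A^{\star})\geq j$, hence $A^{\star}\in\Sigma_j$. Using $A^{\star}$ as the competitor in the definition \eqref{cn} immediately gives $c_j\leq\sup_{u\in A^{\star}}J_\phi(u)\leq -\varepsilon_j<0$.

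For the bound $c_j>-\infty$, the strategy is to prove that $J_\phi$ is bounded from below on the whole space $W_0^{1,\mathcal H}(\Omega)$, so that $c_j\geq\inf J_\phi>-\infty$ follows at once from the definition. The role of the truncation $\phi$ is that $\phi(\|u\|)=0$ whenever $\|u\|\geq t_2$, so on this outer region the critical contribution in $J_\phi$ vanishes and only the principal part and the sandwich perturbation remain. There I would pass from the Luxemburg norm to the modular $\rho_{\mathcal H}(\nabla u)$: using $p\leq q$ one has $\int_\Omega \mathcal A(x,\nabla u)\,\mathrm{d} x\geq \tfrac{1}{q}\rho_{\mathcal H}(\nabla u)$, while \eqref{H3} combined with $\|\nabla u\|_{a,q}^q\leq\rho_{\mathcal H}(\nabla u)$ yields $\int_\Omega w(x)|u|^s\,\mathrm{d} x\leq C_w\rho_{\mathcal H}(\nabla u)^{s/q}$. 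Since $s/q<1$ by \eqref{H1}, the one-variable function $t\mapsto \tfrac{1}{q}t-\tfrac{\lambda C_w}{s}t^{s/q}$ attains a finite minimum on $[0,\infty)$, providing a uniform lower bound for $J_\phi$ on $\{\|u\|\geq t_2\}$. On the complementary bounded ball $\{\|u\|\leq t_2\}$, every term in \eqref{jphi} is controlled polynomially in $\|u\|$ via Proposition~\ref{P.m-n}, H\"older's inequality together with the integrability of $w$ encoded in \eqref{H3}, and the embedding constants in \eqref{Ce}; hence $J_\phi$ is bounded from below there as well.

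The main obstacle is the sandwich exponent $s\in(p,q)$: since $s>p$, the sandwich perturbation cannot be absorbed by the $p$-growth term $\|u\|^p$ that governs $\rho_{\mathcal H}$ for $\|u\|>1$, so a naive norm-based coercivity argument would allow $J_\phi\to-\infty$. The resolution is to recast coercivity in terms of the modular $\rho_{\mathcal H}(\nabla u)$, combining the estimate $\|\nabla u\|_{a,q}^q\leq \rho_{\mathcal H}(\nabla u)$ with hypothesis \eqref{H3} to produce the favorable exponent $s/q<1$; this compatibility between the weight $w$ and the $q$-side of the double phase structure is precisely what \eqref{H3} is tailored to provide.
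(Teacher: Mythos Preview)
Your proposal is correct and follows the same two-step structure as the paper's proof: the upper bound $c_j<0$ via Lemma~\ref{genus.k} is identical, and the lower bound $c_j>-\infty$ is obtained by showing $J_\phi$ is bounded from below, which the paper states in one line by invoking the cutoff \eqref{cutoff} and the relation $s<q$. Your modular-based argument (using $\int_\Omega\mathcal{A}(x,\nabla u)\,\mathrm{d} x\geq \tfrac{1}{q}\rho_{\mathcal H}(\nabla u)$ together with \eqref{Cond-H3} to obtain the favorable exponent $s/q<1$) is precisely the detailed justification of that line, and your remark that a norm-based estimate via $\|u\|^p$ would fail because $s>p$ correctly identifies why the modular reformulation is needed.
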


\begin{proof}
	Fix $\theta>0$ and $\lambda>d_j$. By Lemma \ref{genus.k} there exists $\varepsilon_j>0$ such that ${J_\phi}^{-\varepsilon_j}\in \Sigma_j$ and so
	\begin{align*}
		c_j= \inf_{A\in
			\Sigma_j} \sup_{u \in A}\, {J_\phi}(u) \leq\sup_{u\in {J_\phi}^{-\varepsilon_j}}\, {J_\phi}(u) \leq
		-\varepsilon_j <0.
	\end{align*}
	On the other hand, by \eqref{jphi} with $\phi\in C_c^\infty(\R)$ satisfying \eqref{cutoff}, and the fact that $s<q$, we easily see that ${J_\phi}$ is bounded from below which yields
	\begin{align*}
		c_j>-\infty.
	\end{align*}
	This completes the proof.
\end{proof}

In order to get solutions of \eqref{Eq.Sandwich}, we need to go back to the main functional $J$. Thus, we properly choose $t_1$ and $t_2$ in \eqref{cutoff}. For this, using \eqref{H3} and \eqref{Ce}, we have
\begin{align}\label{PT2.E1}
	J(u)\geq\frac{1}{p}\|\nabla u\|_p^{p}+\frac{1}{q}\|\nabla u\|_{a,q}^{q}-\lambda\widetilde{k}_1\|\nabla u\|_{a,q}^{s}-\theta k_2\|u\|^{q^*}
\end{align}
for any $u\in W_0^{1,\mathcal{H}}(\Omega)$ with $\|u\|\geq 1$, where $\widetilde{k}_1:=C_w/s$ and $k_2:=C_e^{q^*}/p^\ast$.

Fix $m\in (1,p)$ and let $\delta\in (0,1)$ be such that
\begin{align}\label{delta}
	s=\delta m+(1-\delta)q\quad\text{i.e.,}\quad \delta=\frac{q-s}{q-m}.
\end{align}
By Young's inequality we have
\begin{equation}\label{PT2.E2}
	\begin{aligned}
		\lambda\widetilde{k}_1\|\nabla u\|_{a,q}^{s} & =(2q(1-\delta))^{\delta-1}\|\nabla u\|_{a,q}^{(1-\delta)q}(2q(1-\delta))^{1-\delta}\lambda\widetilde{k}_1\|\nabla u\|_{a,q}^{\delta m}  \\
		& \leq \frac{1}{2q}\|\nabla u\|_{a,q}^{q}+\delta(2q(1-\delta))^{\frac{1-\delta}{\delta}}(\lambda\widetilde{k}_1)^{\frac{1}{\delta}}\|\nabla u\|_{a,q}^{m}.
	\end{aligned}
\end{equation}
Combining Proposition \ref{P.m-n}, \eqref{PT2.E1} with \eqref{PT2.E2}, we get
\begin{align*}
	J(u)\geq\frac{1}{2q}\|u\|^{p}-\lambda^{\frac{1}{\delta}}k_1\|u\|^{m}-\theta k_2\|u\|^{q^*}\quad\text{for any } u\in W_0^{1,\mathcal{H}}(\Omega) \text{ with } \|u\|\geq 1,
\end{align*}
where
\begin{align}\label{k1}
	k_1:=\delta(2q(1-\delta))^{\frac{1-\delta}{\delta}}(\widetilde{k}_1)^{\frac{1}{\delta}},
\end{align}
that is,
\begin{align}\label{PTcc.gi}
	J(u)\geq  f_{\lambda,\theta}(\|u\|)\,\, \text{ for any } \, u\in W_0^{1,\mathcal{H}}(\Omega)\, \text{ with }\, \|u\|\geq1,
\end{align}
where
\begin{align*}
	f_{\lambda,\theta}(t):=\frac{1}{2q}t^{p}-\lambda^{\frac{1}{\delta}} k_1t^{m}-\theta k_2 t^{q^*},\quad t\geq0.
\end{align*}

Now, we will check the location of critical points of $f_{\lambda,\theta}$.
For this, we set
\begin{align*}
	f_{\lambda,\theta}(t)=k_1t^{m}\,\widetilde{f}_{\lambda,\theta}(t),\quad \text{with }\,\,\widetilde{f}_{\lambda,\theta}(t):=-\lambda^{\frac{1}{\delta}} +a_0t^{p-m}-\theta b_0t^{q^*-m},
\end{align*}
where
\begin{equation}
	\label{a0}
	a_0:=(2q k_1)^{-1}>0,\quad b_0:=k_2k_1^{-1}>0.
\end{equation}
Clearly, $\widetilde{f}\,'_{\lambda,\theta}(t)>0$ for $t\in (0,T_*)$ and $\widetilde{f}\,'_{\lambda,\theta}(t)<0$ for $t\in (T_*,\infty)$, where $T_*=T_*(\theta)>0$ is set as
\begin{align*}
	T_*:=\left[\frac{a_0(p-m)}{\theta b_0(q^*-m)}\right]^{\frac{1}{q^*-p}}>0,
\end{align*}
 Thus, if $\widetilde{f}_{\lambda,\theta}(T_*)>0$, then there exist $T_1$, $T_2>0$ such that
\begin{align*}
	T_1<T_*< T_2\quad \text{and}\quad
		\widetilde{f}_{\lambda,\theta}(t)\begin{cases}
			<0,\quad t\in (0,T_1)\cup (T_2,\infty),\\
			>0,\quad t\in (T_1,T_2).
		\end{cases}
	\end{align*}
For this, we observe that
\begin{align*}
	\widetilde{f}_{\lambda,\theta}(T_*)=-\lambda^{\frac{1}{\delta}}
	+a_0^{\frac{q^*-m}{q^*-p}}b_0^{\frac{m-p}{q^*-p}}\left(\frac{p-m}{q^*-m}\right)^{\frac{p-m}{q^*-p}}\frac{q^*-p}{q^*-m}\theta^{\frac{m-p}{q^*-p}}>0,
\end{align*}
if we assume $\lambda<\widetilde{\lambda}$, with $\widetilde{\lambda}=\widetilde{\lambda}(\theta)$ given as
\begin{align}\label{PT2.la0}
	\widetilde{\lambda}:=c_0^\delta\,\theta^{\frac{\delta(m-p)}{q^*-p}},
\end{align}
where
\begin{align*}
	c_0:=a_0^{\frac{q^*-m}{q^*-p}}b_0^{\frac{m-p}{q^*-p}}\left(\frac{p-m}{q^*-m}\right)^{\frac{p-m}{q^*-p}}\frac{q^*-p}{q^*-m}>0.
\end{align*}
Furthermore, from $\widetilde{f}_{\lambda,\theta}(T_1)=0$ we easily get
\begin{align}\label{PT2.T1}
	T_1>\left(a_0^{-\delta}\lambda\right)^{\frac{1}{\delta(p-m)}}
\end{align}
and we observe that $T_1>1$ if $\lambda>a_0^\delta$. For this, we need
\begin{align*}
	\widetilde{\lambda}>a_0^\delta,
\end{align*}
which holds true whenever $\theta\in \left(0,\theta_0\right)$ with
\begin{align}\label{PT2.the0}
	\theta_0:=(c_0a_0^{-1})^{\frac{q^*-p}{p-m}}.
\end{align}
Thus, from the analysis above, if we consider $\theta\in\left(0,\theta_0\right)$ and $\lambda\in\left(a_0^\delta,\widetilde{\lambda}\right)$, then
$f_{\lambda,\theta}(T_1)=f_{\lambda,\theta}(T_2)=0$, with $1<T_1<T_*<T_2$, and
\begin{align}\label{PT2.E3}
	f_{\lambda,\theta}(t)\begin{cases}
		<0,\quad t\in (0,T_1)\cup (T_2,\infty),\\
		>0,\quad t\in (T_1,T_2).
	\end{cases}
\end{align}
Let $\theta\in \left(0,\theta_0\right)$ and let $\lambda\in \left(a_0^\delta, \widetilde{\lambda}\right)$. Then, from now on, we take $t_1=T_1$ and $t_2=T_2$ in \eqref{cutoff}, so that by \eqref{jphi} we have
\begin{align}
	{J_\phi}(u)&\geq J(u),\quad\text{for any } u\in W_0^{1,\mathcal{H}}(\Omega),\label{PT2.Ph1}\\
	{J_\phi}(u)&=J(u),\quad\text{for any } u\in W_0^{1,\mathcal{H}}(\Omega) \text{ with } \|u\|\leq T_1,\label{PT2.Ph2}
\end{align}
and
\begin{align}
	\label{PT2.Ph3}
	{J_\phi}(u)=\int_\Omega \mathcal{A}(x,\nabla u) \,\mathrm{d} x-\frac{\lambda}{s}\int_\Omega w(x)|u|^s
	\,\mathrm{d} x
\end{align}
for any $ u\in W_0^{1,\mathcal{H}}(\Omega)$ with $\|u\|\geq T_2$.

This ensures that we can always return to $J$ when $J_\phi$ reaches negative values, as shown in the next lemma.

\begin{lemma}\label{LPhi<0}
	Let hypotheses \eqref{H1}--\eqref{H3} be satisfied, let $\theta\in \left(0,\theta_0\right)$ and $\lambda\in \left(a_0^\delta,\widetilde{\lambda}\right)$, with $\delta$, $a_0$, $\widetilde{\lambda}$ and $\theta_0$ as defined in \eqref{delta}, \eqref{a0}, \eqref{PT2.la0} and \eqref{PT2.the0}, respectively. Then, ${J_\phi}(u)<0$ implies that $\|u\|<T_1$, and so ${J_\phi}(u)=J(u)$ and ${J_\phi}'(u)=J'(u)$.
\end{lemma}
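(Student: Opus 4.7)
The plan is to argue by contrapositive: fix $u \in W_0^{1,\mathcal H}(\Omega)$ and show that $\|u\| \ge T_1$ forces $J_\phi(u) \ge 0$. Since $T_1 > 1$ under the standing choices of $\theta \in (0,\theta_0)$ and $\lambda \in (a_0^\delta,\widetilde\lambda)$, any such $u$ satisfies $\|u\| \ge 1$, so the estimate \eqref{PTcc.gi}, namely $J(u) \ge f_{\lambda,\theta}(\|u\|)$, is available throughout. Once $\|u\| < T_1$ has been established, continuity of the norm $\|\cdot\|$ yields an open neighbourhood $U \ni u$ with $\|v\| < t_1 = T_1$ for every $v \in U$; by the definition of $\phi$ this gives $\phi(\|v\|) \equiv 1$ on $U$, so $J_\phi$ and $J$ coincide on $U$ as $C^1$ functionals and both $J_\phi(u) = J(u)$ and $J_\phi'(u) = J'(u)$ follow.

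I would split the contrapositive into the two sub-cases dictated by the support of $\phi$. First, for $T_1 \le \|u\| \le T_2$, the inequalities $0 \le \phi(\|u\|) \le 1$ together with $\widehat{B}(x,u) \ge 0$ (which holds because $b_0 > 0$ and $b \ge 0$) give \eqref{PT2.Ph1}, i.e.\ $J_\phi(u) \ge J(u)$. Combining this with \eqref{PTcc.gi} and the sign information for $f_{\lambda,\theta}$ in \eqref{PT2.E3}, which is nonnegative on $[T_1,T_2]$ with zeros exactly at the endpoints, I conclude $J_\phi(u) \ge f_{\lambda,\theta}(\|u\|) \ge 0$.

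The more delicate sub-case is $\|u\| > T_2$, where the cut-off annihilates the critical term and by \eqref{PT2.Ph3} one has
\begin{align*}
J_\phi(u) = \int_\Omega \mathcal{A}(x,\nabla u)\,\mathrm{d} x - \frac{\lambda}{s}\int_\Omega w(x)|u|^s\,\mathrm{d} x.
\end{align*}
Applying \eqref{H3} to bound the sandwich term by $\lambda\widetilde{k}_1 \|\nabla u\|_{a,q}^s$ and splitting this via the Young inequality \eqref{PT2.E2} exactly as in the derivation of \eqref{PTcc.gi} but without the critical piece, and then using Proposition~\ref{P.m-n} together with $\|u\| \ge 1$ to obtain $\|\nabla u\|_{a,q}^m \le \|u\|^m$ and $\frac{1}{p}\|\nabla u\|_p^p + \frac{1}{2q}\|\nabla u\|_{a,q}^q \ge \frac{1}{2q}\|u\|^p$, I arrive at the truncated lower bound
\begin{align*}
J_\phi(u) \ge \frac{1}{2q}\|u\|^p - \lambda^{1/\delta}k_1\|u\|^m.
\end{align*}
The estimate \eqref{PT2.T1}, rewritten as $T_1^{p-m} > a_0^{-1}\lambda^{1/\delta} = 2qk_1\lambda^{1/\delta}$, then forces $\frac{1}{2q}\|u\|^{p-m} > k_1\lambda^{1/\delta}$ for every $\|u\| \ge T_2 > T_1$, whence $J_\phi(u) > 0$, which completes the contradiction.

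The main and essentially only obstacle is this second sub-case: once the critical term has been truncated away, a positive lower bound for $J_\phi(u)$ must still be extracted from just the double-phase energy pitted against the sandwich term. The mechanism that makes it work is precisely the way $T_1$ is tied to the roots of $f_{\lambda,\theta}$ through $\widetilde{f}_{\lambda,\theta}(T_1) = 0$: this identity forces $T_1$ to lie strictly above the unique positive root of the truncated comparison function $t \mapsto \frac{1}{2q}t^p - \lambda^{1/\delta}k_1 t^m$, so that every $\|u\| \ge T_2 > T_1$ automatically lands in its positivity region and the argument closes.
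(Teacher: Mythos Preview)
Your proof is correct and follows essentially the same route as the paper's. The paper argues by contradiction (assuming $J_\phi(u)<0$ and $\|u\|\ge T_1$, then using $J_\phi(u)\ge J(u)\ge f_{\lambda,\theta}(\|u\|)$ to force $\|u\|>T_2$, and finally deriving the same truncated lower bound $\frac{1}{2q}\|u\|^p-\lambda^{1/\delta}k_1\|u\|^m$ to contradict \eqref{PT2.T1}); your contrapositive with the explicit case split $\|u\|\in[T_1,T_2]$ versus $\|u\|>T_2$ is just a cosmetic reorganization of the same ingredients, and your neighbourhood argument for $J_\phi'(u)=J'(u)$ is a slightly more explicit version of what the paper records by the strict inequality $\|u\|<T_1$.
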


\begin{proof}
	Fix $\theta\in \left(0,\theta_0\right)$, $\lambda\in \left(a_0^\delta,\widetilde{\lambda}\right)$ and let ${J_\phi}(u)<0$. Thus, $J(u)<0$ due to \eqref{PT2.Ph1}. Suppose by contradiction that $\|u\|\geq T_1>1$. Then, it follows from \eqref{PTcc.gi} that $f_{\lambda,\theta}(\|u\|)<0$. By \eqref{PT2.E3}, we have $\|u\|>T_2>1$. Hence, we conclude from \eqref{PT2.Ph3} that
	\begin{align*}
		J_\phi(u)=\int_\Omega \mathcal{A}(x,\nabla u) \,\mathrm{d} x-\frac{\lambda}{s}\int_\Omega w(x)|u|^s\,\mathrm{d} x<0.
	\end{align*}
	From this, using \eqref{H3}, Proposition \ref{P.m-n} and \eqref{PT2.E2}, we obtain
	\begin{align*}
		\frac{1}{2q}\|u\|^{p}-k_1\lambda^{\frac{1}{\delta}}\|u\|^{m}<0,
	\end{align*}
	with $k_1$ as given in \eqref{k1}. This along with \eqref{a0} yields
	\begin{align*}
		T_1\leq \|u\|\leq \left(2qk_1\lambda^{\frac{1}{\delta}}\right)^{\frac{1}{p-m}}=\left(a_0^{-\delta}\lambda\right)^{\frac{1}{\delta(p-m)}},
	\end{align*}
	which contradicts \eqref{PT2.T1}. Thus, $\|u\|<T_1$ and so ${J_\phi}(u)=J(u)$ and ${J_\phi}'(u)=J'(u)$ due to \eqref{PT2.Ph2}. This shows the assertion.
\end{proof}

Considering Lemmas \ref{LPhi<0} and \ref{Le.PS}, the validity of the compactness condition of ${J_\phi}$ for negative levels can be established if the positivity of the right-hand side of \eqref{PS_c2} (or \eqref{PS_c2'} if $b\equiv 0$) is guaranteed.	To this end, we observe that for $\theta\in (0,1)$, we can rewrite the right-hand side of \eqref{PS_c2} as
\begin{align*}
	\overline{c}_{\lambda,\theta} & :=C_1\theta^{-\frac{q}{q^*-q}}- C_3\lambda^{\frac{q^\ast}{q^\ast-s}}\theta^{-\frac{s}{q^\ast-s}}-C_2\lambda^{\frac{q}{q-s}} \\
	& =\theta^{-\frac{q}{q^*-q}}\left[\frac{1}{2}C_1 -C_3\lambda^{\frac{q^\ast}{q^\ast-s}}\theta^{\frac{q^\ast(q-s)}{(q^\ast-q)(q^\ast-s)}}\right]+\frac{1}{2}C_1\theta^{-\frac{q}{q^*-q}}-C_2\lambda^{\frac{q}{q-s}}.
\end{align*}
Observe that $\overline{c}_{\lambda,\theta}>0$ provided that
\begin{equation}
	\label{PT2.bala}
	\lambda<C_5\theta^{-\frac{q-s}{q^*-q}}=:\overline{\lambda}
\end{equation}
with
\begin{align*}
	C_5:=\min\left\{\left(\frac{1}{2}C_1C_3^{-1}\right)^{\frac{q^*-s}{q^*}},\left(\frac{1}{2}C_1C_2^{-1}\right)^{\frac{q-s}{q}}\right\}.
\end{align*}
For the case $b\equiv 0$, we just take $\overline{c}_{\lambda,\theta}  :=C_1\theta^{-\frac{p}{p^*-p}} -C_2\lambda^{\frac{q}{q-s}}$ and  $C_5:=\left(C_1C_2^{-1}\right)^{\frac{q-s}{q}}$. Thus, we set
\begin{align}\label{lambdastar}
	\lambda^*:=\min\left\{\widetilde{\lambda},\overline{\lambda}\right\}.
\end{align}

\begin{lemma}\label{LPhiPS}
	Let hypotheses \eqref{H1}--\eqref{H3} be satisfied, and let $\theta\in \left(0,\min\{\theta_0,1\}\right)$ satisfy $a_0^\delta<\lambda^*$, where $a_0$, $\theta_0$  and $\lambda_*$ are given in  \eqref{a0}, \eqref{PT2.the0} and \eqref{lambdastar}, respectively. Then, for any $\lambda\in \left(a_0^\delta,\lambda^*\right)$, the functional ${J_\phi}$ satisfies the $\textup{(PS)}_c$ condition for any $c<0$.
\end{lemma}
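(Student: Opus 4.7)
The plan is to reduce the $(PS)_c$ condition for the truncated functional $J_\phi$ to the $(PS)_c$ condition for the original $J$, and then invoke Lemma~\ref{Le.PS}. Let $c<0$ and take an arbitrary $(PS)_c$ sequence $\{u_n\}_{n\in\N}\subset W_0^{1,\mathcal{H}}(\Omega)$ for $J_\phi$.

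The first step is to show boundedness of $\{u_n\}$. This is essentially forced by the cut-off: whenever $\|u\|\geq T_2$, the definition \eqref{cutoff} of $\phi$ and \eqref{PT2.Ph3} give
\begin{align*}
J_\phi(u)=\int_\Omega \mathcal{A}(x,\nabla u)\,\mathrm{d}x-\frac{\lambda}{s}\int_\Omega w(x)|u|^s\,\mathrm{d}x,
\end{align*}
and combining \eqref{H3}, Proposition~\ref{P.m-n} and the estimate \eqref{PT2.E2} (using $s<q$) shows this expression is coercive. Hence if $\|u_n\|\to\infty$ along a subsequence, $J_\phi(u_n)\to\infty$, contradicting $J_\phi(u_n)\to c$. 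Consequently $\{u_n\}$ is bounded in $W_0^{1,\mathcal{H}}(\Omega)$.

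The second step is to pass from $J_\phi$ to $J$ along the tail of the sequence. Since $J_\phi(u_n)\to c<0$, for all sufficiently large $n$ we have $J_\phi(u_n)<0$. The hypotheses on $\theta$ and $\lambda$ are precisely those required by Lemma~\ref{LPhi<0}, because $\lambda^\ast\le\widetilde{\lambda}$ by \eqref{lambdastar}; thus for such $n$ we get $\|u_n\|<T_1$, $J_\phi(u_n)=J(u_n)$ and $J'_\phi(u_n)=J'(u_n)$. In particular, $\{u_n\}$ is (eventually) a bounded $(PS)_c$ sequence for the original functional $J$.

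The last step is to apply Lemma~\ref{Le.PS} to this sequence. Convergence of a subsequence in $W_0^{1,\mathcal{H}}(\Omega)$ is guaranteed as soon as $c<\overline{c}_{\lambda,\theta}$ with $\overline{c}_{\lambda,\theta}$ the threshold from \eqref{PS_c2} (or \eqref{PS_c2'} when $b\equiv 0$). Since $c<0$ by assumption, it suffices to show $\overline{c}_{\lambda,\theta}>0$, and this is exactly the content of the definition of $\overline{\lambda}$ in \eqref{PT2.bala}; the constraint $\lambda<\lambda^\ast\le\overline{\lambda}$ supplies it. The main technical point, more than an obstacle, is to check that the various thresholds stack up correctly: the window $(a_0^\delta,\lambda^\ast)$ must simultaneously make the truncation argument work (via Lemma~\ref{LPhi<0}, which needs $\lambda<\widetilde{\lambda}$) and keep us below the concentration-compactness level (via Lemma~\ref{Le.PS}, which needs $\lambda<\overline{\lambda}$); by construction $\lambda^\ast=\min\{\widetilde{\lambda},\overline{\lambda}\}$ is tailor-made to enforce both at once, completing the argument.
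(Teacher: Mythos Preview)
Your proof is correct and follows essentially the same approach as the paper's: use Lemma~\ref{LPhi<0} to pass from $J_\phi$ to $J$ along the tail of the sequence, then invoke Lemma~\ref{Le.PS} since $c<0<\overline{c}_{\lambda,\theta}$ under the constraint $\lambda<\lambda^\ast\leq\overline{\lambda}$. Your first step (boundedness via coercivity of $J_\phi$ on $\{\|u\|\geq T_2\}$) is correct but redundant, since Lemma~\ref{LPhi<0} already forces $\|u_n\|<T_1$ for all large $n$, which is precisely how the paper obtains boundedness in one stroke.
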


\begin{proof}
	Fix $\theta\in \left(0,\min\{\theta_0,1\}\right)$ such that $a_0^\delta<\lambda^*$. Let $\lambda\in \left(a_0^\delta,\lambda^*\right)$, and let $\{u_n\}_{n\in\mathbb{N}}$ be a $\textup{(PS)}_c$ sequence for the functional ${J_\phi}$ with $c<0$, that is, \eqref{ps} with $I=J_\phi$ holds.
	Then, there exists $n_{0}\in\mathbb N$ large enough such that ${J_\phi}(u_{n})<0$ for any $n>n_{0}$. Consequently, by Lemma \ref{LPhi<0} we get $\|u_{n}\|<T_1$, and so ${J_\phi}(u_{n})=J(u_{n})$ and ${J_\phi}'(u_{n})=J'(u_{n})$ for $n>n_{0}$. This fact implies that $\{u_n\}_{n\in\mathbb{N}}$ is a bounded $\textup{(PS)}_c$ sequence for the functional $J$. By the choice of $\lambda^*$ as given in \eqref{lambdastar}, the sequence  $\{u_n\}_{n\in\mathbb{N}}$ admits a convergent subsequence in $W_0^{1,\mathcal{H}}(\Omega)$ in view of Lemma \ref{Le.PS}. The proof is complete.
\end{proof}

Summarizing, we observe that, once $j\in\N$ is fixed,  the functional $J_\phi$ verifies Lemmas \ref{Le.c_n<0}--\ref{LPhiPS} if $\theta\in\left(0,\min\left\{1,\theta_0\right\}\right)$  satisfying $\max\left\{a_0^\delta,d_j\right\}<\lambda^*$ and if $\lambda\in\left(\max\left\{a_0^\delta,d_j\right\},\lambda^*\right)$. Hence, we need to guarantee that
\begin{align}\label{provare}
	\max\left\{a_0^\delta,d_j\right\}<\lambda^*,\quad\text{when }\theta<\min\left\{1,\theta_0\right\}.
\end{align}
In this direction, from the sequence $\{\delta_j\}_{j\in\mathbb{N}}$ given in \eqref{equi.norms}, we define $\{\lambda_j\}_{j\in\mathbb{N}}$ as
\begin{align}\label{PT2.lak}
	\lambda_j:=C_6\delta_j^{2s},\quad\text{for any } j\in\N,
\end{align}
where $C_6:=a_0^\delta+C_4$ with $a_0$ and $C_4$ as given in \eqref{a0} and \eqref{PT2.dk}, respectively. Since $1<\delta_j<\delta_{j+1}$ for any $j\in\N$, we have
\begin{align}\label{monotonia}
	\max\left\{a_0^\delta,d_j\right\}<\lambda_j<\lambda_{j+1},\quad\text{for any }  j\in\N.
\end{align}
Note that with $\widetilde{\lambda}$ given by \eqref{PT2.la0}, we have
\begin{align*}
	\lambda_j=\widetilde{\lambda}=c_0^\delta\theta^{\frac{\delta(m-p)}{q^*-p}}\quad \Longleftrightarrow \quad  \theta=\left(c_0^{-\delta}C_6\right)^{\frac{q^*-p}{\delta(m-p)}}\delta_j^{-\frac{2s(q^*-p)}{\delta(p-m)}}.
\end{align*}
We also observe that considering $\overline{\lambda}$ given in \eqref{PT2.bala}, it follows that
\begin{align*}
	\lambda_j=\overline{\lambda}=C_5\theta^{-\frac{q-s}{q^*-q}} \quad \Longleftrightarrow \quad  \theta=\left(C_5^{-1}C_6\right)^{\frac{q-q^*}{q-s}}\delta_j^{-\frac{2s(q^*-q)}{q-s}}.
\end{align*}
Thus, let us set
\begin{align}\label{PT2.thek}
	\theta_j:=\min\left\{1,\theta_0,\left(c_0^{-\delta}C_6\right)^{\frac{q^*-p}{\delta(m-p)}},\left(C_5^{-1}C_6\right)^{\frac{q-q^*}{q-s}}\right\}\delta_j^{-\kappa}
\end{align}
with $\kappa:=\max\left\{\frac{2s(q^*-p)}{\delta(p-m)},\frac{2s(q^*-q)}{q-s}\right\}$. We derive from \eqref{PT2.la0} and \eqref{PT2.bala} that $\theta_j>0$ is independent of $\lambda$ and for any $\theta\in \left(0,\theta_j\right)$, we have
\begin{align*}
	\theta<\min\left\{1,\theta_0\right\} \quad \text{and}\quad \lambda_j<\lambda^*,
\end{align*}
with $\lambda^*$ as given in \eqref{lambdastar}.
Hence, considering also \eqref{monotonia}, we have the validity of \eqref{provare} whenever $\theta\in \left(0,\theta_j\right)$. Now, we are in a position to prove Theorem~\ref{Theo.MS}.

\begin{proof}[Proof of Theorem \ref{Theo.MS}]
	Let $j\in\N$ be fixed and let us set $\lambda_j$ and $\theta_j$ as in \eqref{PT2.lak} and \eqref{PT2.thek}, respectively. Let $\theta\in \left(0,\theta_j\right)$, so that $\lambda_j<\lambda^*$, with $\lambda^*$ given in \eqref{lambdastar}. Then, let $\lambda\in\left(\lambda_j,\lambda^*\right)$ and let us consider the minimax sequence $\{c_j\}_{j\in\mathbb{N}}$ given in \eqref{cn}. By Lemma \ref{Le.c_n<0}, we know that
	\begin{align*}
		-\infty<c_i<0,\quad\text{for any }\,i=1,\ldots,j.
	\end{align*}
	Thus, Lemma \ref{LPhiPS} yields that $J_\phi$ satisfies the $\textup{(PS)}_{c_i}$ condition, for any $i=1,\ldots,j$ and so,  $c_i$ are critical values for $J_\phi$ (see Rabinowitz \cite{Rabinowitz-1986} for the details). Setting
	\begin{align*}
		K_c:=\left\{u\in W_0^{1,\mathcal{H}}(\Omega)\setminus\{0\}\colon\,\, J_\phi(u)=c\text{ and }J_\phi'(u)=0\right\},
	\end{align*}
	we infer that $K_{c_i}$ are compact, for any $i=1,\ldots,j$.

	Now, we distinguish two situations. Either $\{c_i\colon\,\, i=1,\ldots,j\}$ are $j$ distinct critical values of $J_\phi$ or $c_n=c_{n+1}=\ldots=c_j=c$ for some $n\in\{1,\ldots,j-1\}$. In the second situation, since $K_c$ is compact, by a deformation lemma, standard properties of the genus (see again Rabinowitz \cite[Proposition 7.7]{Rabinowitz-1986}) and arguing as in Farkas--Fiscella-Winkert \cite[Lemma 3.6]{Farkas-Fiscella-Winkert-2022}, we get
	\begin{align*}
		\gamma(K_c)\geq j-n+1\geq2.
	\end{align*}
	Thus, $K_c$ has infinitely many points, see Rabinowitz \cite[Remark 7.3]{Rabinowitz-1986}, which are infinitely many critical values for $J_\phi$. Consequently, $J_\phi$ admits at least $j$ negative critical values, which represent at least $j$ negative critical values for $J$ thanks to Lemma \ref{LPhi<0}.
\end{proof}

\section*{Acknowledgments}
C. Farkas was supported by the J\'anos Bolyai Research Scholarship of the Hungarian Academy of Science. A. Fiscella is member of {\em Gruppo Nazionale per l'Analisi Ma\-te\-ma\-ti\-ca, la Probabilit\`a e le loro Applicazioni} (GNAMPA) of the {\em Istituto Nazionale di Alta Matematica} (INdAM). A. Fiscella realized the manuscript within the auspices of the FAPESP project titled \textit{Non-uniformly elliptic problems} (2024/04156-0). K. Ho was supported by the University of Economics Ho Chi Minh City (UEH), Vietnam.


\begin{thebibliography}{99}

\bibitem{Arora-Fiscella-Mukherjee-Winkert-2022}
	R. Arora, A. Fiscella, T. Mukherjee, P. Winkert,
	{\it On critical double phase Kirchhoff problems with singular nonlinearity},
	Rend. Circ. Mat. Palermo (2) {\bf 71} (2022), no. 3, 1079--1106.

\bibitem{Bahrouni-Radulescu-Repovs-2019}
	A. Bahrouni, V.D. R\u{a}dulescu, D.D. Repov\v{s},
	{\it Double phase transonic flow problems with variable growth: nonlinear patterns and stationary waves},
	Nonlinearity {\bf 32} (2019), no. 7, 2481--2495.

\bibitem{Baldelli-Brizi-Filippucci-2021}
	L. Baldelli, Y. Brizi, R. Filippucci,
	{\it Multiplicity results for $(p,q)$-Laplacian equations with critical exponent in $\mathbb{R}^N$ and negative energy},
	Calc. Var. Partial Differential Equations {\bf 60} (2021), no. 1, Paper No. 8, 30 pp.

\bibitem{Baroni-Colombo-Mingione-2015}
	P. Baroni, M. Colombo, G. Mingione,
	{\it Harnack inequalities for double phase functionals},
	Nonlinear Anal. {\bf 121} (2015), 206--222.

\bibitem{Baroni-Colombo-Mingione-2016}
	P. Baroni, M. Colombo, G. Mingione,
	{\it Non-autonomous functionals, borderline cases and related function classes},
	St. Petersburg Math. J. {\bf 27} (2016), 347--379.

\bibitem{Baroni-Colombo-Mingione-2018}
	P. Baroni, M. Colombo, G. Mingione,
	{\it Regularity for general functionals with double phase},
	Calc. Var. Partial Differential Equations {\bf 57} (2018), no. 2, Art. 62, 48 pp.

\bibitem{Benci-DAvenia-Fortunato-Pisani-2000}
	V. Benci, P. D'Avenia, D. Fortunato, L. Pisani,
	{\it Solitons in several space dimensions: Derrick's problem and  infinitely many solutions},
	Arch. Ration. Mech. Anal. {\bf 154} (2000), no. 4, 297--324.

\bibitem{Brezis-Nirenberg-1983}
	H. Br\'{e}zis, L. Nirenberg,
	{\it Positive solutions of nonlinear elliptic equations involving critical Sobolev exponents},
	Comm. Pure Appl. Math. {\bf 36} (1983), no. 4, 437--477.

\bibitem{Cherfils-Ilyasov-2005}
	L. Cherfils, Y. Il'yasov,
	{\it On the stationary solutions of generalized reaction diffusion  equations with {$p\&q$}-{L}aplacian},
	Commun. Pure Appl. Anal. {\bf 4} (2005), no. 1, 9--22.

\bibitem{Chlebicka-Gwiazda-Swierczewska-Gwiazda-Wroblewska-Kaminska-2021}
	I. Chlebicka, P. Gwiazda, A. \'{S}wierczewska-Gwiazda, A. Wr\'{o}blewska-Kami\'{n}ska,
	``Partial Differential Equations in Anisotropic Musielak-Orlicz Spaces'',
	Springer, Cham, 2021.

\bibitem{Colasuonno-Perera-2025}
	F. Colasuonno, K. Perera,
	{\it Critical growth double phase problems: The local case and a Kirchhoff type case},
	J. Differential Equations {\bf 422} (2025), 426--488.

\bibitem{Colasuonno-Squassina-2016}
	F. Colasuonno, M. Squassina,
	{\it Eigenvalues for double phase variational integrals},
	Ann. Mat. Pura Appl. (4) {\bf 195} (2016), no. 6, 1917--1959.

\bibitem{Colombo-Mingione-2015a}
	M. Colombo, G. Mingione,
	{\it Bounded minimisers of double phase variational integrals},
	Arch. Ration. Mech. Anal. {\bf 218} (2015), no. 1, 219--273.

\bibitem{Colombo-Mingione-2015b}
	M. Colombo, G. Mingione,
	{\it Regularity for double phase variational problems},
	Arch. Ration. Mech. Anal. {\bf 215} (2015), no. 2, 443--496.

\bibitem{Crespo-Blanco-Gasinski-Harjulehto-Winkert-2022}
	\'{A}. Crespo-Blanco, L. Gasi\'nski, P. Harjulehto, P. Winkert,
	{\it A new class of double phase variable exponent problems: Existence and uniqueness},
	J. Differential Equations {\bf 323} (2022), 182--228.

\bibitem{Cupini-Marcellini-Mascolo-2023}
	G. Cupini, P. Marcellini, E. Mascolo,
	{\it Local boundedness of weak solutions to elliptic equations with $p,q$-growth},
	Math. Eng. {\bf 5} (2023), no. 3, Paper No. 065, 28 pp.

\bibitem{Diening-Harjulehto-Hasto-Ruzicka-2011}
	L. Diening, P. Harjulehto, P. H\"{a}st\"{o}, M. R\r{u}\v{z}i\v{c}ka,
	``Lebesgue and Sobolev Spaces with Variable Exponents'',
	Springer, Heidelberg, 2011.

\bibitem{Farkas-Fiscella-Winkert-2022}
	C. Farkas, A. Fiscella, P. Winkert,
	{\it On a class of critical double phase problems},
	J. Math. Anal. Appl. {\bf 515} (2022), no. 2, Paper No. 126420, 16 pp.

\bibitem{Farkas-Winkert-2021}
	C. Farkas, P. Winkert,
	{\it An existence result for singular Finsler double phase problems},
	J. Differential Equations {\bf 286} (2021), 455--473.

\bibitem{Feng-Bai-2024}
	Y. Feng, Z. Bai,
	{\it Multiple nontrivial solutions for a double phase system with concave-convex nonlinearities in subcritical and critical cases},
	Anal. Math. Phys. {\bf 14} (2024), no. 6, Paper No. 125, 44 pp.

\bibitem{Fonseca-Leoni-2007}
	I. Fonseca, G. Leoni,
	``Modern Methods in the Calculus of Variations: $L^p$ Spaces'',
	Springer, New York, 2007.

\bibitem{Ha-Ho-2024}
	H.H. Ha, K. Ho,
	{\it Multiplicity results for double phase problems involving a new type of critical growth},
	J. Math. Anal. Appl. {\bf 530} (2024), no. 1, Paper No. 127659, 36 pp.

\bibitem{Harjulehto-Hasto-2019}
	P. Harjulehto, P. H\"{a}st\"{o},
	``Orlicz Spaces and Generalized Orlicz Spaces'',
	Springer, Cham, 2019.

\bibitem{Ho-Kim-Sim-2019}
	K. Ho, Y.-H. Kim, I. Sim,
	{\it Existence results for Schr\"{o}dinger $p(\cdot)$-Laplace equations involving critical growth in $\mathbb{R}^N$},
	Nonlinear Anal. {\bf 182} (2019), 20--44.

\bibitem{Ho-Kim-Zhang-2024}
	K. Ho, Y.-H. Kim, C. Zhang,
	{\it Double phase anisotropic variational problems involving critical growth},
	Adv. Nonlinear Anal. {\bf 13} (2024), no. 1, Paper No. 20240010, 38 pp.

\bibitem{Ho-Perera-Sim-2023}
	K. Ho, K. Perera, I. Sim,
	{\it On the Brezis-Nirenberg problem for the $(p,q)$-Laplacian},
	Ann. Mat. Pura Appl. (4) {\bf 202} (2023), no. 4, 1991--2005.

\bibitem{Ho-Sim-2021}
	K. Ho, I. Sim,
	{\it An existence result for $(p,q)$-Laplace equations involving sandwich-type and critical growth},
	Appl. Math. Lett. {\bf 111} (2021), Paper No. 106646, 8 pp.

\bibitem{Ho-Sim-2016}
	K. Ho, I. Sim,
	{\it On degenerate $p(x)$-Laplace equations involving critical growth with two parameters},
	Nonlinear Anal. {\bf 132} (2016), 95--114.

\bibitem{Ho-Sim-2023}
	K. Ho, I. Sim,
	{\it On sufficient ``local'' conditions for existence results to generalized $p(\cdot)$-Laplace equations involving critical growth},
	Adv. Nonlinear Anal. {\bf 12} (2023), no. 1, 182--209.

\bibitem{Ho-Winkert-2023}
	K. Ho, P. Winkert,
	{\it New embedding results for double phase problems with variable exponents and a priori bounds for corresponding generalized double phase problems},
	Calc. Var. Partial Differential Equations {\bf 62} (2023), no. 8, Paper No. 227, 38 pp.

\bibitem{Kawohl-Lucia-Prashanth-2007}
	B. Kawohl, M. Lucia, S. Prashanth,
	{\it Simplicity of the principal eigenvalue for indefinite quasilinear problems},
	Adv. Differential Equations {\bf 12} (2007), no. 4, 407--434.

\bibitem{Kumar-Radulescu-Sreenadh-2020}
	D. Kumar, V.D. R\u{a}dulescu, K. Sreenadh,
	{\it Singular elliptic problems with unbalanced growth and critical exponent},
	Nonlinearity {\bf 33} (2020), no. 7, 3336--3369.

\bibitem{Lions-1985}
	P.-L. Lions,
	{\it The concentration-compactness principle in the calculus of variations. The limit case. I},
	Rev. Mat. Iberoamericana {\bf 1} (1985), no. 1, 145--201.

\bibitem{Liu-Dai-2018}
	W. Liu, G. Dai,
	{\it Existence and multiplicity results for double phase problem},
	J. Differential Equations {\bf 265} (2018), no. 9, 4311--4334.

\bibitem{Liu-Papageorgiou-2021}
	Z. Liu, N.S. Papageorgiou,
	{\it Asymptotically vanishing nodal solutions for critical double phase problems},
	Asymptot. Anal. {\bf 124} (2021), no. 3-4, 291--302.

\bibitem{Marcellini-2021}
	P. Marcellini,
	{\it Growth conditions and regularity for weak solutions to nonlinear elliptic pdes},
	J. Math. Anal. Appl. {\bf 501} (2021), no. 1, Paper No. 124408, 32 pp.

\bibitem{Marcellini-2023}
	P. Marcellini,
	{\it  Local Lipschitz continuity for $p,q$-PDEs with explicit $u$-dependence},
	Nonlinear Anal. {\bf 226} (2023), Paper No. 113066, 26 pp.

\bibitem{Marcellini-1991}
	P. Marcellini,
	{\it Regularity and existence of solutions of elliptic equations with $p,q$-growth conditions},
	J. Differential Equations {\bf 90} (1991), no. 1, 1--30.

\bibitem{Marcellini-1989}
	P. Marcellini,
	{\it Regularity of minimizers of integrals of the calculus of variations with nonstandard growth conditions},
	Arch. Rational Mech. Anal. {\bf 105} (1989), no. 3,  267--284.

\bibitem{Papageorgiou-Vetro-Winkert-2024}
	N.S. Papageorgiou, F. Vetro, P. Winkert,
	{\it Sequences of nodal solutions for critical double phase problems with variable exponents},
	Z. Angew. Math. Phys. {\bf 75} (2024), no. 3, Paper No. 95, 17 pp.

\bibitem{Papageorgiou-Vetro-Winkert-2023}
	N.S. Papageorgiou, F. Vetro, P. Winkert,
	{\it Sign changing solutions for critical double phase problems with variable exponent},
	Z. Anal. Anwend. {\bf 42} (2023), no. 1-2, 235--251.

\bibitem{Papageorgiou-Winkert-2024}
	N.S. Papageorgiou, P. Winkert,
	``Applied Nonlinear Functional Analysis'',
	Second revised edition,  De Gruyter, Berlin, 2024.

\bibitem{Papageorgiou-Zhang-2020}
	N.S. Papageorgiou, C. Zhang,
	{\it Double phase problem with critical and locally defined reaction terms},
	Asymptot. Anal. {\bf 116} (2020), no. 2, 73--92.

\bibitem{Rabinowitz-1986}
	P.H. Rabinowitz,
	``Minimax Methods in Critical Point Theory with Applications to Differential Equations'',
	American Mathematical Society, Providence, RI, 1986.

\bibitem{Zhikov-1986}
	V.V. Zhikov,
	{\it Averaging of functionals of the calculus of variations and elasticity theory},
	Izv. Akad. Nauk SSSR Ser. Mat. {\bf 50} (1986), no. 4, 675--710.

\bibitem{Zhikov-1995}
	V.V. Zhikov,
	{\it On Lavrentiev's phenomenon},
	Russian J. Math. Phys. {\bf 3} (1995), no. 2, 249--269.

\bibitem{Zhikov-2011}
	V.V. Zhikov,
	{\it On variational problems and nonlinear elliptic equations with nonstandard growth conditions},
	J. Math. Sci. {\bf 173} (2011), no. 5, 463--570.

\end{thebibliography}
\end{document}